\newtheorem{thm}{Theorem}[section]
\newtheorem{cor}[thm]{Corollary}
\newtheorem{lem}[thm]{Lemma}
\theoremstyle{definition}
\theoremstyle{remark}
\numberwithin{equation}{section}
\begin{document}

\title
{ Composition operators on harmonic Bloch-type spaces }

\author{\sc Y. Estaremi, A. Ebadian and S. Esmaeili }

\email{y.estaremi@gu.ac.ir}\email{ebadian.ali@gmail.com}\email{dr.somaye.esmaili@gmail.com}

\address{Department of Mathematics and Computer Sciences,
Golestan University, Gorgan, Iran,}
\address{Department of Mathematics, Urmia University, Urmia, Iran,
}
\address{
Department of Mathematics, Payame Noor University, p. o. box: 19395-3697, Tehran, Iran.}

\thanks{}

\subjclass[2020]{47B33}

\keywords{ Harmonic function, composition operator, spectrum.}

\date{}

\dedicatory{}

\commby{}

%%% ----------------------------------------------------------------------
\begin{abstract}
In this paper we consider composition operators on Harmonic-Bloch type spaces and we compute the spectrum of composition operators. Also, we characterize isometric composition operators on harmonic Bloch type spaces.
\end{abstract}

\maketitle

%%% ----------------------------------------------------------------------

\section{\textsc{Introduction}}

Let $f(z)=u(z)+i\upsilon(z)$  ($z=x+iy$) be a continuously differentiable complex valued function on open unit disc $D$. It is known that the formal derivatives of $f$ are as follow:
\begin{align*}
f_{z}&=\frac{1}{2}(f_{x}-if_{y})\\
f_{\bar{z}}&=\frac{1}{2}(f_{x}+if_{y}).
\end{align*}

A twice continuously differentiable complex-valued function $f=u+i\upsilon$ on $D$ is called a Harmonic function if and only if the real-valued functions $u$ and $\upsilon$ satisfy Laplace's equation $\Delta u=\Delta \upsilon=0$.
A direct calculation shows that the Laplacian of $f$ is
$$\Delta f=4f_{z\bar{z}}.$$
Thus for functions $f$ with continuous second partial derivatives, it is clear that $f$ is Harmonic if ana only if $\Delta f=0.$
We consider complex-valued Harmonic function $f$ defined in a simply connected domain $D\subset \mathbb{C}$. The function $f$ has a canonical decomposition $f=h+\bar{g}$, where $h$ and $g$ are analytic on $D$ (See,\cite{dp}, p. 7).
The object of this paper is to study composition operators on the spaces of complex-valued Harmonic functions. Analytic functions are preserved under composition, but Harmonic functions are not.
A planar complex-valued Harmonic function $f$ in $D$ is called a Harmonic Bloch function if and only if

$$\beta_{f}=\sup_{z,w\in D,z\neq w}\frac{|f(z)-f(w)|}{\varrho(z,w)}<\infty,$$

in which $\beta_{f}$ is the Lipschitz number of $f$ and

\begin{align*}
\varrho(z,w)&=\frac{1}{2}\log(\frac{1+|\frac{z-w}{1-\bar{z}w}|}{1-|\frac{z-w}{1-\bar{z}w}|})\\
&=\frac{1}{2}\log(\frac{1+\rho(z,w)}{1-\rho(z,w)})\\
&=\arctan h|\frac{z-w}{1-\bar{z}w}|,
\end{align*}

denotes the hyperbolic distance between $z$ and $w$ in $D$, in which $\rho(z,w)$ is the pseudo-hyperbolic distance on $D$. In this paper we denote the hyperbolic disk with center $a$ and radius $r>0$, by
 $$D(a,r)=\{z:\varrho(a,z)<r\}.$$

In \cite{cf} Colonna proved that
$$\beta_{f}=\sup_{z,w\in D,z\neq w}\frac{|f(z)-f(w)|}{\varrho(z,w)}=\sup_{z\in D}(1-|z|^2)[|f_{z}(z)|+|f_{\bar{z}}(z)|].$$

Moreover, the set of all Harmonic Bloch functions, denoted by $HB(1)$ or $HB$, forms a complex Banach space with the norm $|||.|||_{HB(1)}$, given by

$$|||f|||_{HB(1)}=|f(0)|+\sup_{z\in D}(1-|z|^2)[|f_{z}(z)|+|f_{\bar{z}}(z)|].$$

For $\alpha\in(0,\infty)$, the Harmonic $\alpha$-Bloch space $HB(\alpha)$ (also referred to as Harmonic Bloch-type space) consists of complex-valued Harmonic functions $f$ defined on $D$, such that

$$\|f\|_{HB(\alpha)}=\sup_{z\in D}(1-|z|^2)^\alpha[|f_{z}(z)|+|f_{\bar{z}}(z)|]<\infty.$$

In addition the Harmonic little $\alpha$-Bloch space $HB_{0}(\alpha)$, consists of all functions in $HB(\alpha)$ such that

$$\lim_{|z|\rightarrow1}(1-|z|^2)^\alpha[|f_{z}(z)|+|f_{\bar{z}}(z)|]=0.$$

Obviously, for $\alpha=1$, $\|f\|_{HB(\alpha)}=\beta_{f}$.

 The linear space $HB(\alpha)$ is a Banach space with the norm given by

$$|||f|||_{HB(\alpha)}=|f(0)|+\|f\|_{HB(\alpha)}=|f(0)|+\sup_{z\in D}(1-|z|^2)^\alpha[|f_{z}(z)|+|f_{\bar{z}}(z)|].$$

Also $HB_{0}(\alpha)$ is a closed subspace of $HB(\alpha)$.\\

Let $\varphi$ be an analytic self-map of $D$, i. e., an analytic function $\varphi$ on $D$ such that $\varphi(D)\subset D$. The composition operator $C_{\varphi}$ induced by such $\varphi$ is a linear map on the spaces of all Harmonic functions on the unit disk defined by

$$C_{\varphi}f=f\circ\varphi.$$

It is easy to see that an operator defined in this manner is linear. Composition operators can act on various types of function spaces. In each case one of the the main goals is to discover the connection between the properties of the inducing function $\varphi$ and the operator theoretic properties of $C_{\varphi}$, for example, being bounded, compact, invertible, normal, subnormal, isometric, closed range, Fredholm, and many others. Extensive references for many of the known results on the subject can be found in \cite{af, cm, sh, fl, zha, nz, nz1}. In \cite{eee1,eee2} we characterized bounded, compact and fredholm composition operators on Harmonic Bloch function spaces.
In this paper we are going to compute the spectrum of composition operators on Harmonic Bloch-type spaces $HB(\alpha)$. Also, we characterize isometric composition operators on harmonic Bloch type spaces.\\

Observe that $\sup _{z\in D}(1-|z|^2)^\alpha [|f_{z}(z)|]+[|f_{\bar{z}}(z)|]$ is a pseudo-norm, which coincides with the harmonic $\alpha$-Bloch norm on the closed subspace of functions that vanish at the origin. In general it coincides with the quotient norm on $\frac{HB(\alpha)}{C}$, where $C$ denotes the closed subspace of constant functions.

To state the results obtained, we need the following definition.
Let $\rho(z,w)=|\varphi_{z}(w)|$ denote the pseudo-hyperbolic distance (between $z$ and $w$) on $D$, where $\varphi_{z}$ is a disk automorphism of $D$, that is,

$$\varphi_{z}(w)=\frac{z-w}{1-\bar{z}w}.$$

Let $\upsilon:[0,1]\rightarrow \Re$ be a continuous non-increasing function which is positive except $\upsilon(1)=0.$ We also denote by $\upsilon$ the function defined on the unit disk by $\upsilon(z)=\upsilon(|z|)$.

%To express the Fredholm operators we need the associated weight function that is defined as
%
%$$\tilde{\upsilon}(z):=1/\sup\{|f(z)|:f\in HB(\alpha),\|f\|_{HB(\alpha)}\leq1\}=1/\|k_{z}\|_{(HB(\alpha))^{\ast}}.$$
%
%Here $k_{z}:HB(\alpha)\rightarrow C$ is the linear functional defined by evaluation at $z$, that is, $k_{z}f=f(z)$.
%There are many cases in which $\tilde{\upsilon}(z)=\upsilon(z)$. For instance, if $g$ is any harmonic function on $D$ and $\upsilon(z)=1/\max_{|w|=|z|}|g(w)|$ is a weight, then $|f(z)|\leq\max_{|w|=|z|}|g(w)|$ whenever $\|f\|\leq1$, by \cite{amzg}. Since $\|g\|\leq1$, we find that $\tilde{\upsilon}(z)=\upsilon(z)$. This is the case of the harmonic Bloch-type spaces in which $\upsilon(z)=(1-|z|^2)^\alpha$ where $\alpha>0$. In this case $g(z)=(1-z^2)^{-\alpha}$.

Recall that the composition operator $C_{\varphi}:HB(\alpha)\rightarrow HB(\alpha)$ is bounded if and only if $\sup _{z\in D}\frac{(1-|z|^2)^\alpha}{(1-|\varphi(z)|^2)^\alpha}|\varphi^{'}(z)|<\infty$ and the operator  $C_{\varphi}:HB_{0}(\alpha)\rightarrow HB_{0}(\alpha)$ is bounded if and only if $\varphi\in B_{0}(\alpha)$ and
 $$\sup _{z\in D}\frac{(1-|z|^2)^\alpha}{(1-|\varphi(z)|^2)^\alpha}|\varphi^{'}(z)|\leq \infty.$$

Recall that $(z_{k})$ is iteration sequence for $\varphi:D\rightarrow D$ when $\varphi(0)=0$, if $\varphi(z_{k})=z_{k+1}$ for all $k$.

For $\alpha>0$, let $HB_{m}(\alpha):=z^{m}HB(\alpha)$ be the subspace of $HB(\alpha)$. That $HB_{m}(\alpha)$ is equivalently described as
$$\{f\in HB(\alpha): f\ \ \ \ \text{has a zero of at least order}\ \  m\ \ \ \text{at zero} \}.$$

Now we want to estimate the norm of the evaluation map acting on the subspaces $HB_{m}(\alpha)$ of $HB(\alpha)$.

\begin{lem}\label{lem11}
Let $\alpha>1$ and $m\in N$. Then there is a constant $c(\alpha)$ depending only on $\alpha$, such that

$$|f(w)|\leq c(\alpha)\frac{|w|^{m}}{(1-|z|^{2})^{\alpha-1}}\|f\|_{HB(\alpha)}$$

for all  $f\in HB_{m}(\alpha)$ and $w\in D$.
\end{lem}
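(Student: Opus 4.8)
The plan is to reduce the harmonic estimate to a scalar (analytic) one via the canonical decomposition and then to extract the factor $|w|^{m}$ by radial integration, controlling the resulting analytic quotients by the maximum modulus principle. Write $f=h+\bar g$ with $h,g$ analytic. From $f_{z}=h'$, $f_{\bar z}=\overline{g'}$ and the definition of the norm we have $(1-|z|^{2})^{\alpha}\bigl(|h'(z)|+|g'(z)|\bigr)\le\|f\|_{HB(\alpha)}$ for all $z\in D$. The membership $f\in HB_{m}(\alpha)$ forces the homogeneous expansion of $f$ to start in degree $m$; comparing the degree-$j$ parts $a_{j}z^{j}+\bar b_{j}\bar z^{j}$ shows $a_{1}=\dots=a_{m-1}=0$ and $b_{1}=\dots=b_{m-1}=0$ for the Taylor coefficients of $h,g$, so that $h'$ and $g'$ each vanish to order $m-1$ at $0$. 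Writing $h'(z)=z^{m-1}\Phi_{h}(z)$ and $g'(z)=z^{m-1}\Phi_{g}(z)$ with $\Phi_{h},\Phi_{g}$ analytic, and integrating along $t\mapsto tw$ (using $f(0)=0$), I get
\[
f(w)=\int_{0}^{1}\bigl(w\,h'(tw)+\bar w\,\overline{g'(tw)}\bigr)\,dt
=w^{m}\!\int_{0}^{1}t^{m-1}\Phi_{h}(tw)\,dt+\bar w^{m}\!\int_{0}^{1}t^{m-1}\overline{\Phi_{g}(tw)}\,dt,
\]
whence $|f(w)|\le|w|^{m}\int_{0}^{1}t^{m-1}\bigl(|\Phi_{h}(tw)|+|\Phi_{g}(tw)|\bigr)\,dt$. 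This already isolates the factor $|w|^{m}$, so the work reduces to estimating the $\Phi$-integrals.

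Next I would bound $\Phi_{h},\Phi_{g}$ through the derivative estimate. Since $\Phi_{h}$ is analytic, the maximum modulus principle gives, for $|\zeta|\le s$, the bound $|\Phi_{h}(\zeta)|\le s^{-(m-1)}\max_{|\eta|=s}|h'(\eta)|\le s^{-(m-1)}(1-s^{2})^{-\alpha}\|f\|_{HB(\alpha)}$, and likewise for $\Phi_{g}$. Inserting this into the $t$-integral (with the radius $s$ chosen so that $s\ge t|w|$) and then comparing with the one-dimensional growth integral, which for $\alpha>1$ satisfies $\int_{0}^{|w|}(1-u^{2})^{-\alpha}\,du\le c(\alpha)(1-|w|^{2})^{-(\alpha-1)}$, is meant to produce exactly the boundary factor $(1-|w|^{2})^{-(\alpha-1)}$ appearing in the statement (note that the free variable in the displayed denominator should read $|w|$). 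The reduction to the analytic growth estimate is the conceptual core, and the constant should emerge depending on $\alpha$ alone.

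The main obstacle is reconciling the two competing regimes inside a single clean inequality. A single fixed choice of radius in the maximum modulus step is too crude: taking $s=|w|$ preserves the vanishing factor $|w|^{m}$ but yields the wrong boundary exponent $\alpha$ in place of $\alpha-1$, whereas taking $s=t|w|$ restores the exponent $\alpha-1$ but, after the cancellation $t^{m-1}(t|w|)^{-(m-1)}$, collapses the gain back to a single power of $|w|$. The genuine difficulty is therefore to interpolate between these choices, letting $s=s(t)$ vary so as to keep the order-$m$ vanishing (which governs $|w|$ bounded away from $1$) while simultaneously capturing the boundary growth (which governs $|w|\to1$); splitting the range of $|w|$ at a fixed threshold such as $|w|=\tfrac12$ and estimating each piece separately is the organizing device I would use. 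Particular care is needed in tracking the dependence of the constant on $m$ during this optimization, since it is precisely the uniformity of $c(\alpha)$ in $m$ that makes the estimate delicate.
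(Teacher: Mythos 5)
Your reduction is set up correctly: writing $f=h+\bar g$ with $f_{z}=h'$ and $f_{\bar z}=\overline{g'}$, so that the seminorm controls $|h'|+|g'|$; comparing homogeneous parts to see that $h$ and $g$ vanish to order $m$; and integrating along $t\mapsto tw$ to isolate $|w|^{m}$. This is indeed the natural route, and for what it is worth the paper states this lemma without any proof at all (it is imported from the Aron--Lindstr\"om circle of ideas), so there is no argument in the paper to compare against. However, the step you flag as ``delicate'' --- interpolating the maximum-modulus radius $s(t)$ so as to keep both the factor $|w|^{m}$ and the boundary exponent $\alpha-1$ with $c(\alpha)$ independent of $m$ --- is not merely delicate: it is impossible, because the statement is false with an $m$-uniform constant. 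Test $f(z)=z^{m}$, which is analytic, harmonic, vanishes to order $m$, and so lies in $HB_{m}(\alpha)$. Then $\|f\|_{HB(\alpha)}=\max_{0\le r<1}\,m r^{m-1}(1-r^{2})^{\alpha}$; the maximum occurs at $r^{2}=(m-1)/(m-1+2\alpha)$ and yields $\|f\|_{HB(\alpha)}\asymp c_{\alpha}\,m^{1-\alpha}\to 0$ as $m\to\infty$, since $\alpha>1$. At $|w|=\tfrac12$ the claimed inequality would read $2^{-m}\le c(\alpha)\,2^{-m}\,(4/3)^{\alpha-1}\,c_{\alpha}\,m^{1-\alpha}$, whose right-hand side is $o(2^{-m})$ as $m\to\infty$; hence for every fixed $c(\alpha)$ the inequality fails once $m$ is large. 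No choice of the radius function $s(t)$ in your scheme can circumvent this, because the obstruction lives in the statement, not in the method.

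What is true, and what the rest of the paper actually needs, is the version with constant $c(\alpha)\,2^{m}$ (equivalently, a constant $c(\alpha,m)$), obtained exactly by the splitting at $|w|=\tfrac12$ you mention: for $|w|\le\tfrac12$ apply the maximum modulus principle at the fixed radius $s=\tfrac12$ to $h'(z)/z^{m-1}$ and $g'(z)/z^{m-1}$, giving $|f(w)|\le c(\alpha)(2|w|)^{m}\|f\|_{HB(\alpha)}$ while $(1-|w|^{2})^{\alpha-1}$ stays bounded; for $|w|\ge\tfrac12$ use your growth integral $\int_{0}^{|w|}(1-u^{2})^{-\alpha}\,du\le c(\alpha)(1-|w|^{2})^{1-\alpha}$ and pay the factor $|w|^{-m}\le 2^{m}$. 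This $2^{m}$ loss is consistent with the factor $2^{m}$ already visible in Lemma 1.2, and it is harmless in the proof of Theorem 1.5: there the lemma is first invoked only for finiteness of $L_{\lambda}$, and in the concluding estimate $m$ is fixed before $K\to\infty$, so an $m$-dependent constant does not disturb the limit. Your interpolation idea does give a uniform $c(\alpha)$ in the boundary regime $1-|w|^{2}\lesssim 1/m$, but not on all of $D$ as the lemma asserts. Two smaller points: the denominator in the statement should indeed read $(1-|w|^{2})^{\alpha-1}$, as you noted; and the paper's definition $HB_{m}(\alpha)=z^{m}HB(\alpha)$ is itself defective for harmonic functions (multiplying $\bar g$ by $z^{m}$ destroys harmonicity), so your decision to work with the equivalent ``zero of order at least $m$'' description was the right one.
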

\begin{lem}\label{lem12}
Let $w\in D$ and $|w|\leq\frac{1}{2}$. Then

$$\|\delta_{w}\|_{HB_{m}(\alpha)}\leq\|\delta_{w}\|_{HB(\alpha)}\leq2^{m}\|\delta_{w}\|_{HB_{m}(\alpha)}.$$
\end{lem}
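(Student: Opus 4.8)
The first inequality needs no work: since $HB_m(\alpha)\subseteq HB(\alpha)$, the functional $\delta_w$ on $HB_m(\alpha)$ is merely the restriction of $\delta_w$ on $HB(\alpha)$ to a subspace, and restricting a bounded functional to a subspace can only decrease its norm. So the entire content of the lemma sits in the reverse estimate $\|\delta_w\|_{HB(\alpha)}\le 2^m\|\delta_w\|_{HB_m(\alpha)}$, which asserts that forcing a zero of order $m$ at the origin costs at most the factor $2^m$ in the evaluation norm, provided $w$ stays inside $|w|\le\frac12$.

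My plan for the reverse inequality is to manufacture, out of an arbitrary $f\in HB(\alpha)$, a companion function lying in $HB_m(\alpha)$ that agrees with $f$ at $w$ while keeping the Bloch-type seminorm under control. Using the canonical decomposition $f=h+\bar g$ with $h,g$ analytic (so that $f_z=h'$, $f_{\bar z}=\overline{g'}$, and hence $\|f\|_{HB(\alpha)}=\sup_z(1-|z|^2)^\alpha(|h'(z)|+|g'(z)|)$), I would set
$$F(z)=\Big(\tfrac{z}{w}\Big)^m h(z)+\overline{\Big(\tfrac{z}{w}\Big)^m g(z)}.$$
This $F$ is harmonic (an analytic term plus the conjugate of an analytic term), it has a zero of order at least $m$ at the origin, and $F(w)=h(w)+\overline{g(w)}=f(w)$, so $F\in HB_m(\alpha)$ with $\delta_w(F)=\delta_w(f)$. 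Consequently $|f(w)|=|F(w)|\le\|\delta_w\|_{HB_m(\alpha)}\,\|F\|_{HB(\alpha)}$, and the desired inequality follows once I establish $\|F\|_{HB(\alpha)}\le 2^m\|f\|_{HB(\alpha)}$ and pass to the supremum over $f$ in the unit ball (normalising so that $h(0)=g(0)=0$, which is the relevant case since the seminorm $\|\cdot\|_{HB(\alpha)}$ only sees the quotient by the constants).

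Estimating $\|F\|_{HB(\alpha)}$ is where the real work lies. The product rule gives $F_z=(z/w)^m\big(h'+\tfrac{m}{z}h\big)$, and analogously for $F_{\bar z}$, so that
$$(1-|z|^2)^\alpha(|F_z|+|F_{\bar z}|)\le\Big|\tfrac{z}{w}\Big|^m\Big[(1-|z|^2)^\alpha(|h'|+|g'|)+\tfrac{m}{|z|}(1-|z|^2)^\alpha(|h|+|g|)\Big].$$
The first bracketed term is bounded by $\|f\|_{HB(\alpha)}$; the second is a genuinely lower-order term involving the values $|h|,|g|$ rather than their derivatives, and it is exactly here that Lemma \ref{lem11}, together with the elementary $\alpha$-Bloch growth bound $(1-|z|^2)^{\alpha-1}|h(z)|\le c(\alpha)\|f\|_{HB(\alpha)}$ for functions vanishing at the origin, is invoked to absorb it. The amplification factor $|z/w|^m$ is at most $|w|^{-m}$ since $|z|<1$, and this is precisely where $|w|\le\frac12$ enters, converting $|w|^{-m}$ into the clean constant of the statement. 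The delicate point I expect to fight with is the bookkeeping of this final step: the product rule unavoidably produces the $\tfrac{m}{z}$ lower-order contributions, and arranging the combined effect of the amplification factor and these terms to collapse into the single factor $2^m$ (rather than a larger $m$-dependent constant) is the crux, and is exactly what the hypothesis $|w|\le\frac12$ is tailored to deliver.
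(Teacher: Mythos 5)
The paper gives no proof of this lemma at all (it is stated bare, transplanted from Lemma~2 of Aron--Lindstr\"om \cite{al}), so your proposal can only be judged on its own terms; and your instinct is in fact exactly the source's proof: in \cite{al} the ambient space is the weighted sup-norm space $H_v^\infty$, where $g=(z/w)^m f$ gives $\|g\|_v\le |w|^{-m}\|f\|_v\le 2^m\|f\|_v$ in one line, because the norm is a weighted supremum of \emph{values} and multiplication by $(z/w)^m$ produces no lower-order terms. In the Bloch-type norm the step you yourself flag as ``the crux'' is a genuine gap, and it cannot be closed: the main term $|z/w|^m(1-|z|^2)^\alpha(|h'|+|g'|)$ already exhausts the entire budget, since the multiplier $(z/w)^m$ acting on the derivative data has norm exactly $|w|^{-m}=2^m$ (approached by derivatives peaking near the boundary), so any strictly positive contribution from $\tfrac{m}{z}h$ and $\tfrac{m}{z}g$ pushes the constant above $2^m$. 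Concretely, take $m=1$, $w=\tfrac12$, $g\equiv 0$, $h(0)=0$, $h'(z)=(1-z^2)^{-\alpha}$ with $1<\alpha<2$. Then $\|f\|_{HB(\alpha)}=1$, while on the radius $h(r)\sim 2^{-\alpha}(1-r)^{1-\alpha}/(\alpha-1)$, so
$$(1-r^2)^{\alpha}\bigl(h(r)+rh'(r)\bigr)=r+\frac{1-r}{\alpha-1}\bigl(1+o(1)\bigr)=1+(1-r)\Bigl(\frac{1}{\alpha-1}-1\Bigr)\bigl(1+o(1)\bigr)>1$$
for $r$ near $1$, whence $\|F\|_{HB(\alpha)}=2\sup_z(1-|z|^2)^\alpha|h+zh'|>2\|f\|_{HB(\alpha)}$. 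The best your construction can deliver is $C(\alpha)2^m$ with $C(\alpha)\to\infty$ as $\alpha\downarrow 1$, not the stated $2^m$.

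There is a second, more structural flaw: the reduction ``normalise $h(0)=g(0)=0$'' is not available, because $\delta_w$ does not annihilate constants and hence does not act on the quotient $HB(\alpha)/\Complex$; replacing $f$ by $f-f(0)$ changes $\delta_w(f)$, and adding the constant back takes $F$ out of $HB_m(\alpha)$. Indeed the constants show the inequality you are trying to prove is \emph{false as stated} for small $|w|$, with the only norm that makes $\delta_w$ bounded on $HB(\alpha)$, namely $|||f|||=|f(0)|+\|f\|_{HB(\alpha)}$: the function $\mathbf{1}$ gives $\|\delta_w\|_{HB(\alpha)}\ge 1$ for every $w$, whereas any $f\in HB_m(\alpha)$, $m\ge 1$, with $|||f|||=\|f\|_{HB(\alpha)}\le 1$ satisfies $|f(w)|\le\int_0^1|w|\bigl(|f_z|+|f_{\bar z}|\bigr)(tw)\,dt\le |w|(1-|w|^2)^{-\alpha}\le |w|(4/3)^{\alpha}$ for $|w|\le\tfrac12$ (Lemma \ref{lem11} even gives the sharper $c(\alpha)|w|^m(1-|w|^2)^{1-\alpha}$), so $2^m\|\delta_w\|_{HB_m(\alpha)}\to 0$ as $w\to 0$ with $m$ fixed, contradicting the right-hand inequality. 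So the lemma is an uncritical verbatim transplant of a statement whose proof and validity live in the $H_v^\infty$ setting (where $\|\delta_w\|_{H_v^\infty}=v(w)^{-1}$); in $HB(\alpha)$ it would first have to be reformulated (e.g., for $\alpha>1$ via the isomorphism with a weighted sup space, at the cost of replacing $2^m$ by $C(\alpha)2^m$) before any argument, yours included, could succeed. In short: the easy half of your proof is fine, the strategy is the historically correct one, but the admitted crux genuinely fails and the quotient normalisation is invalid --- and no repair is possible without changing the statement itself.
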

we need the following crucial Lemma due to Cowen and MacCluer.

\begin{lem}\label{lem13}
If $\varphi$ is not an automorphism and $\varphi(0)=0$, then given $0<r<1$, there exists $1\leq M<\infty$ such that if $(z_{k})_{k=-K}^{\infty}$ is an iteration sequence with $|z_{n}|\geq r$ for some non-negative integer $n$ and $(w_{k})_{k=-K}^{n}$ are arbitrary numbers, then there exists $f\in H^{\infty}$ with $\|f\|_{\infty}\leq M\sup\{|w_{k}|:-K\leq k\leq n\}$. Further there exists $b<1$ such that for any iteration sequence $(z_{k})_{k=-K}^{\infty}$ we have $|z_{k+1}|/|z_{k}|\leq b$ whenever $|z_{k}|\leq\frac{1}{2}$.
\end{lem}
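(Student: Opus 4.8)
The plan is to treat the two assertions separately, establishing the contraction estimate first since it feeds into the interpolation statement. I read the first assertion as the standard Cowen--MacCluer interpolation conclusion, namely that one can choose $f\in H^\infty$ with $f(z_k)=w_k$ for all $-K\le k\le n$ together with $\|f\|_\infty\le M\sup\{|w_k|\}$; the content is that the interpolation constant $M$ can be taken uniform over all admissible iteration sequences.

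For the contraction estimate I would argue as follows. Since $\varphi(0)=0$, the function $g(z)=\varphi(z)/z$ extends analytically to $D$, and the Schwarz lemma gives $|g(z)|\le 1$. Because $\varphi$ is not an automorphism it is not a rotation, so $|g|<1$ throughout $D$; by continuity and the maximum principle $b:=\sup_{|z|\le 1/2}|g(z)|<1$. Writing $|z_{k+1}|=|\varphi(z_k)|=|z_k|\,|g(z_k)|$ then yields $|z_{k+1}|/|z_k|\le b$ whenever $|z_k|\le\tfrac12$, which is the last clause. The same Schwarz estimate $|\varphi(z)|\le|z|$ shows that the moduli along any iteration sequence are non-increasing in the forward direction, so the hypothesis $|z_n|\ge r$ forces $|z_k|\ge r$ for every $-K\le k\le n$; in particular all the relevant points lie in the annulus $\{r\le|z|<1\}$.

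For the interpolation assertion I would show that the portion of the iteration sequence lying in $\{r\le|z|<1\}$ is uniformly separated in the pseudo-hyperbolic metric, with a separation constant $\delta=\delta(\varphi,r)>0$ independent of the particular sequence and of $K,n$, and then invoke Carleson's interpolation theorem, which turns uniform separation into a bounded linear interpolation operator $\ell^\infty\to H^\infty$ whose norm $M$ depends only on $\delta$. To produce $\delta$ I would bound $\rho(z_k,z_{k+1})=\rho(z_k,\varphi(z_k))$ below: on the compact part of the annulus away from the boundary the quantity $\rho(z,\varphi(z))$ is continuous and, since $0$ is the only fixed point of $\varphi$ in $D$ by the Denjoy--Wolff theorem, strictly positive, hence bounded below; meanwhile the Schwarz--Pick inequality $\rho(\varphi(z),\varphi(w))\le\rho(z,w)$ shows the consecutive separations are non-decreasing as the index decreases, so the infimum over the sequence is controlled by its values in the interior annulus and is uniform.

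The main obstacle is precisely the uniformity near the unit circle: as the backward iterates $z_{-1},z_{-2},\dots$ approach $\partial D$ one must prevent the pseudo-hyperbolic gaps from collapsing, and, more seriously, one must upgrade mere pairwise separation to the full Carleson (uniform Blaschke) condition $\prod_{j\ne k}\rho(z_j,z_k)\ge\eta>0$ with $\eta$ independent of the sequence. Controlling this product uniformly, by combining the Denjoy--Wolff geometry at the interior fixed point $0$ with the Schwarz--Pick contraction, is the delicate step; once it is in hand, Carleson's theorem supplies the uniform constant $M$ and completes the proof.
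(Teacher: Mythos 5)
You have correctly diagnosed the easy half: your proof of the second clause, via $g(z)=\varphi(z)/z$, the Schwarz lemma, and the maximum principle on the compact disk $\{|z|\le\tfrac12\}$, is exactly right, as is your observation that $|\varphi(z)|\le|z|$ forces $|z_k|\ge r$ for all $-K\le k\le n$ (and you also correctly restored the interpolation condition $f(z_k)=w_k$, which the paper's statement accidentally omits). But for the first clause your proposal is a strategy, not a proof, and the step you yourself flag as "the delicate step" --- uniform pseudo-hyperbolic separation as the backward iterates approach $\partial D$, upgraded to a uniform Carleson condition --- is the entire content of the lemma. Your compactness/Denjoy--Wolff argument genuinely fails to deliver this: continuity of $\rho(z,\varphi(z))$ gives a lower bound only on compact subsets of the annulus, and your Schwarz--Pick monotonicity of consecutive gaps reduces everything to a lower bound on $\rho(z_n,z_{n+1})$, for which you have no uniform estimate. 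Note that the paper itself offers no proof to compare against --- it quotes the lemma from Cowen and MacCluer (Lemma 7.33 of \cite{cm}) --- so the relevant benchmark is their argument, which you did not reconstruct.

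The missing idea is a quantitative Schwarz--Pick estimate that makes all your feared degeneracies impossible and bypasses Carleson's theorem in its general form. Since $\varphi$ is not an automorphism and fixes $0$, the function $\psi(z)=\varphi(z)/z$ is a self-map of $D$ with $|\psi(0)|=|\varphi'(0)|<1$, and Schwarz--Pick applied to $\psi$ gives $1-|\psi(z)|\ge\frac{(1-|\varphi'(0)|)(1-|z|)}{1+|\varphi'(0)||z|}\ge\frac{1-|\varphi'(0)|}{2}\,(1-|z|)$. Writing $1-|\varphi(z)|=(1-|z|)+|z|\bigl(1-|\psi(z)|\bigr)$, one gets, for $|z|\ge r$,
\begin{equation*}
\frac{1-|z|}{1-|\varphi(z)|}\;\le\;\Bigl(1+\tfrac{r}{2}\bigl(1-|\varphi'(0)|\bigr)\Bigr)^{-1}\;=:\;c\;<\;1 ,
\end{equation*}
with $c$ depending only on $r$ and $\varphi$. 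Hence along any admissible iteration sequence $\frac{1-|z_k|}{1-|z_{k+1}|}\le c$ for $-K\le k\le n-1$, so read backwards $z_n,z_{n-1},\dots,z_{-K}$ is an exponential sequence in the sense of Hayman--Newman, and such sequences are interpolating for $H^\infty$ with interpolation constant $M=M(c)$ --- uniform over all sequences, all $K$, and all $n$, which is precisely the uniformity you could not reach. This estimate also shows, incidentally, that your consecutive gaps cannot collapse (one gets $\rho(z_k,z_{k+1})\ge\delta(r,\varphi)>0$ directly from the ratio bound), but the exponential-ratio route makes both the separation and the Carleson-type condition automatic, with no appeal to Denjoy--Wolff or to boundary behavior of $\varphi$.
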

\begin{lem}\label{lem14}
Let $\alpha>1$ and suppose that $\varphi(0)=0$  and that $C_{\varphi}:HB(\alpha)\rightarrow HB(\alpha)$ is bounded. Then $\{\varphi^{\prime}(0)^{n}\}_{n=0}^{\infty}\subset\sigma_{HB(\alpha)}(C_{\varphi})$ and, if $\lambda\neq 0$ is an eigenvalue of $C_{\varphi}$, then $\lambda\in\{\varphi^{\prime}(0)^{n}\}_{n=0}^{\infty}$.\cite{k}
\end{lem}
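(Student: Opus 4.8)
The plan is to peel the harmonic problem down to an analytic one via the canonical decomposition and then handle the two spectral inclusions separately. Writing $f=h+\bar g$ with $g(0)=0$, a direct computation gives $f_{z}=h'$ and $f_{\bar z}=\overline{g'}$, so that $\|f\|_{HB(\alpha)}=\sup_{z\in D}(1-|z|^{2})^{\alpha}(|h'(z)|+|g'(z)|)$ and $f\in HB(\alpha)$ exactly when both $h$ and $g$ lie in the analytic $\alpha$-Bloch space $B(\alpha)$. Since $\varphi$ is analytic, $C_{\varphi}f=h\circ\varphi+\overline{g\circ\varphi}$, so $C_{\varphi}$ respects this splitting, acting as the analytic composition operator on the holomorphic summand and as its conjugate on the anti-holomorphic one. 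This is the reduction that lets me import the analytic prototype quoted in \cite{k}.

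For the restriction on eigenvalues, suppose $C_{\varphi}f=\lambda f$ with $\lambda\neq0$ and $f=h+\bar g\neq0$. Matching holomorphic and anti-holomorphic parts (unique once $g(0)=0$ is imposed) splits the eigenvalue equation into the two scalar functional equations $h\circ\varphi=\lambda h$ and $g\circ\varphi=\bar\lambda g$. At least one of $h,g$ is nonzero; say $h\not\equiv0$, and let $m$ be the order of its zero at the fixed point $0$. Writing $h(z)=a_{m}z^{m}+\cdots$ with $a_{m}\neq0$ and $\varphi(z)=\varphi'(0)z+\cdots$ (recall $\varphi(0)=0$), the lowest-order term of $h\circ\varphi$ is $a_{m}\varphi'(0)^{m}z^{m}$ while that of $\lambda h$ is $\lambda a_{m}z^{m}$; comparing coefficients forces $\lambda=\varphi'(0)^{m}$. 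The same computation applied to $g$ yields $\bar\lambda=\varphi'(0)^{m'}$, where $m'$ is the order of the zero of $g$.

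For the inclusion $\{\varphi'(0)^{n}\}_{n=0}^{\infty}\subset\sigma_{HB(\alpha)}(C_{\varphi})$ I would exploit the filtration introduced before the statement. Because $\varphi(0)=0$, each subspace $HB_{m}(\alpha)$ of functions vanishing to order at least $m$ at $0$ is $C_{\varphi}$-invariant, and it has finite codimension $2m-1$ in $HB(\alpha)$, complementary to the jets $1,z,\dots,z^{m-1},\bar z,\dots,\bar z^{m-1}$. On the finite-dimensional quotient $HB(\alpha)/HB_{m}(\alpha)$ the induced operator is, in the monomial basis ordered by degree, block triangular with diagonal entries $\varphi'(0)^{j}$ and $\overline{\varphi'(0)}^{\,j}$ for $0\le j<m$, so these scalars are its eigenvalues. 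Since the quotient is finite-dimensional, invertibility of $C_{\varphi}-\mu$ on $HB(\alpha)$ would force the induced map to be surjective, hence bijective; therefore every eigenvalue of the quotient operator must lie in $\sigma_{HB(\alpha)}(C_{\varphi})$. Letting $m\to\infty$ gives the claimed inclusion, and no growth estimates are needed for this half.

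The hard part will be the anti-holomorphic summand. As the computations above show, it contributes the conjugate powers $\overline{\varphi'(0)}^{\,n}$ both as genuine eigenvalues (with eigenfunctions $\overline{g}$, $g$ a Koenigs power) and on the finite-dimensional quotients, so the analysis naturally produces $\{\varphi'(0)^{n}\}\cup\{\overline{\varphi'(0)}^{\,n}\}$ rather than $\{\varphi'(0)^{n}\}$ alone; for instance $\varphi(z)=iz/2$ has $\bar z$ as an eigenfunction with eigenvalue $-i/2\notin\{\varphi'(0)^{n}\}$. Thus the delicate point is the bookkeeping of this conjugation: the stated conclusion is exactly right when $\varphi'(0)$ is real, and in general the eigenvalue description should be read with the conjugate family adjoined. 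Confirming that the two summands do not interact to create further eigenvalues, and pinning down precisely which conjugate powers must be added, is where the real care is required, whereas the inclusion and the power-series matching are routine.
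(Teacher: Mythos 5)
The paper offers no argument for this lemma at all---it simply cites Kamowitz's theorem, which concerns analytic function spaces---so there is no internal proof to compare with step by step; your reduction via the canonical decomposition $f=h+\bar g$, $g(0)=0$, is the natural way to supply one, and both halves of your argument are sound. The eigenvalue equation does decouple into $h\circ\varphi=\lambda h$ and $g\circ\varphi=\bar\lambda g$ by uniqueness of the normalized decomposition, and the lowest-order coefficient comparison is correct (with the minor edge case $\varphi'(0)=0$, where a nonzero eigenvalue forces $m=0$ and $\lambda=1$, worth a sentence). The quotient argument for the spectral inclusion---invariance of the order-$m$ subspace, block-triangular induced operator on the $(2m-1)$-dimensional quotient with diagonal $\varphi'(0)^{j}$ and $\overline{\varphi'(0)}^{\,j}$, and surjectivity of the induced map when $C_\varphi-\mu$ is invertible---is standard and correct. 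Note in passing that the paper's literal definition $HB_m(\alpha)=z^mHB(\alpha)$ does not even consist of harmonic functions; your reading (both parts of the canonical decomposition vanish to order at least $m$) is the one that makes the filtration meaningful.

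The important point, which you should assert rather than hedge: your argument does not merely prove the lemma by a different route, it shows the lemma is \emph{false as stated} in the harmonic setting. Your counterexample is valid: for $\varphi(z)=iz/2$ the function $\bar z\in HB(\alpha)$ satisfies $C_\varphi\bar z=-\tfrac{i}{2}\,\bar z$, and $-i/2\notin\{(i/2)^n\}_{n\ge0}$. Moreover, the "delicate bookkeeping" you defer in your last paragraph is already settled by your own computation: since $C_\varphi$ preserves the (unique) splitting, the two summands cannot interact, so every nonzero eigenvalue lies in $\{\varphi'(0)^n\}_{n\ge0}\cup\{\overline{\varphi'(0)}^{\,n}\}_{n\ge1}$, and your quotient argument places this entire union inside $\sigma_{HB(\alpha)}(C_\varphi)$; there is nothing left to pin down, and no Koenigs eigenfunction in the space is needed for the inclusion. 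The corrected statement is exactly the lemma with the conjugate family adjoined on both sides (these coincide with the stated one precisely when $\varphi'(0)$ is real). This correction propagates to the spectral theorem that follows (Theorem 1.5) and its corollary: the point set $\{\varphi'(0)^n\}$ there must be replaced by $\{\varphi'(0)^n\}\cup\{\overline{\varphi'(0)}^{\,n}\}$, which is harmless at the level of moduli since $|\overline{\varphi'(0)}^{\,n}|=|\varphi'(0)|^{n}$, but changes the isolated points outside the essential spectral radius disk.
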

Here we recall that $r_{e,HB(\alpha)}(C_{\varphi})\}$ is the essential spectrum of $C_{\varphi}$ as an operator on $HB(\alpha)$. In the next theorem we obtain the spectrum of the composition operator $C_{\varphi}$.
\begin{thm}\label{t15}
Let $\alpha>1$ and suppose that $\varphi$, not an automorphism, fixes the origin and that $C_{\varphi}:HB(\alpha)\rightarrow HB(\alpha)$ is bounded. Then,

$$\sigma_{HB(\alpha)}(C_{\varphi})=\{\lambda\in C: |\lambda|\leq r_{e,HB(\alpha)}(C_{\varphi})\}\cup \{\varphi^{\prime}(0)^{n}\}_{n=0}^{\infty}.$$
\end{thm}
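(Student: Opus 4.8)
The plan is to prove the two inclusions separately, treating the region outside the disk of radius $r_{e,HB(\alpha)}(C_{\varphi})$ by Fredholm theory together with Lemma~\ref{lem14}, and the closed disk itself by producing approximate eigenvectors from the iteration-sequence machinery of Lemma~\ref{lem13}. Throughout, write $r_{e}:=r_{e,HB(\alpha)}(C_{\varphi})$ and note that, since $\varphi$ fixes the origin and is not an automorphism, the Schwarz lemma gives $|\varphi^{\prime}(0)|<1$, so the sequence $\{\varphi^{\prime}(0)^{n}\}_{n=0}^{\infty}$ decreases in modulus to $0$; in particular only finitely many of its terms lie outside $\overline{D(0,r_{e})}$.

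First I would show $\sigma_{HB(\alpha)}(C_{\varphi})\subseteq\{\lambda:|\lambda|\le r_{e}\}\cup\{\varphi^{\prime}(0)^{n}\}_{n=0}^{\infty}$. Fix $\lambda$ with $|\lambda|>r_{e}$. By definition of the essential spectral radius the essential spectrum lies in $\overline{D(0,r_{e})}$, so $C_{\varphi}-\lambda I$ is Fredholm. The set $\{\lambda:|\lambda|>r_{e}\}$ is connected and contains all $\lambda$ with $|\lambda|>|||C_{\varphi}|||_{HB(\alpha)}$, where $C_{\varphi}-\lambda I$ is invertible and hence of index $0$; by homotopy invariance of the Fredholm index, $C_{\varphi}-\lambda I$ has index $0$ for every $|\lambda|>r_{e}$. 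Consequently such a $\lambda$ lies in $\sigma_{HB(\alpha)}(C_{\varphi})$ if and only if it is an eigenvalue, and since $\lambda\neq 0$, Lemma~\ref{lem14} forces $\lambda\in\{\varphi^{\prime}(0)^{n}\}_{n=0}^{\infty}$. The reverse inclusion $\{\varphi^{\prime}(0)^{n}\}_{n=0}^{\infty}\subseteq\sigma_{HB(\alpha)}(C_{\varphi})$ is exactly the first assertion of Lemma~\ref{lem14}.

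It remains to prove $\{\lambda:|\lambda|\le r_{e}\}\subseteq\sigma_{HB(\alpha)}(C_{\varphi})$, which is the heart of the argument. Since the spectrum is closed it suffices to show every $\lambda$ with $|\lambda|<r_{e}$ belongs to it, and I would do this by showing $C_{\varphi}-\lambda I$ is not bounded below on any finite-codimensional subspace, so that $\lambda$ lies in the essential spectrum and a fortiori in $\sigma_{HB(\alpha)}(C_{\varphi})$. Fix such a $\lambda$ and a radius $r<1$. Using Lemma~\ref{lem13}, choose an iteration sequence $(z_{k})$ with $|z_{n}|\ge r$ for some $n$ and, for prescribed data $(w_{k})$, the corresponding $f\in H^{\infty}$ with controlled norm; from these I build test functions in the subspaces $HB_{m}(\alpha)$ whose values along the iteration sequence are weighted by powers of $\lambda$, so that applying $C_{\varphi}$ shifts the weights and the difference $(C_{\varphi}-\lambda I)f$ telescopes. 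The hypothesis $|\lambda|<r_{e}$ is what guarantees that the $HB(\alpha)$-norm of $(C_{\varphi}-\lambda I)f$ is small relative to that of $f$, while Lemmas~\ref{lem11} and~\ref{lem12} let me compare these norms through the evaluation functionals $\delta_{w}$ and confirm that the constructed functions lie in $HB_{m}(\alpha)$ for arbitrarily large $m$, hence escape any fixed finite-codimensional subspace. Letting the data vary produces a singular sequence for $C_{\varphi}-\lambda I$, placing $\lambda$ in the essential spectrum, and closedness of the spectrum then yields the full closed disk.

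The main obstacle is precisely this last construction: calibrating the weights and the iteration sequence so that the telescoping difference is genuinely of order $o(\|f\|_{HB(\alpha)})$ exactly when $|\lambda|<r_{e}$, while simultaneously keeping $\|f\|_{HB(\alpha)}$ bounded away from $0$. This is where the quantitative content of Lemma~\ref{lem13} (the uniform bound $M$ and the contraction ratio $b<1$ valid whenever $|z_{k}|\le\tfrac12$) must be combined with the evaluation estimates of Lemmas~\ref{lem11}--\ref{lem12}; getting the threshold to match $r_{e}$ on the nose, rather than some nearby value, is the delicate point, since one needs both that the construction succeeds for every $|\lambda|<r_{e}$ and that nothing beyond the disk survives except the isolated points $\varphi^{\prime}(0)^{n}$. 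Once the closed disk and the points $\{\varphi^{\prime}(0)^{n}\}_{n=0}^{\infty}$ are both shown to lie in $\sigma_{HB(\alpha)}(C_{\varphi})$, the two inclusions together give the claimed equality.
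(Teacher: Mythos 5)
Your handling of the region $|\lambda|>r_{e,HB(\alpha)}(C_{\varphi})$ is correct and matches the paper: there every spectral point is an eigenvalue (the paper quotes Bourdon--Shapiro, Proposition 2.2, while your index-zero homotopy argument is the standard proof of that same fact), and Lemma \ref{lem14} both identifies any nonzero eigenvalue as some $\varphi^{\prime}(0)^{n}$ and supplies the inclusion $\{\varphi^{\prime}(0)^{n}\}_{n=0}^{\infty}\subset\sigma_{HB(\alpha)}(C_{\varphi})$. The genuine gap is in the heart of the proof, the inclusion of the closed disk. You propose to build functions $f\in HB_{m}(\alpha)$ whose values along an iteration sequence are weighted by powers of $\lambda$, ``so that applying $C_{\varphi}$ shifts the weights and $(C_{\varphi}-\lambda I)f$ telescopes,'' and to conclude that $\|(C_{\varphi}-\lambda I)f\|_{HB(\alpha)}$ is small. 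This mechanism fails on the function side: the $H^{\infty}$ interpolation of Lemma \ref{lem13} prescribes the values of $f$ only at the countably many points $z_{k}$, so at best it makes $(C_{\varphi}-\lambda I)f$ vanish \emph{pointwise at those points}; the $HB(\alpha)$-norm is a supremum over all of $D$ of a derivative-type quantity, and nothing in your construction controls it there. There is no telescoping identity for $C_{\varphi}-\lambda I$ acting on functions, so no singular sequence is actually produced.

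The paper resolves exactly this difficulty by passing to the adjoint, where the telescoping is an exact identity: since $C_{m}^{\ast}\delta_{z}=\delta_{\varphi(z)}$ shifts the iteration sequence, the functional $L_{\lambda}=\sum_{k=-K}^{\infty}\lambda^{-k}\delta_{z_{k}}$ satisfies $(C_{m}^{\ast}-\lambda I)L_{\lambda}=-\lambda^{K+1}\delta_{z_{-K}}$, whose norm is that of a single evaluation functional. Lemma \ref{lem13} is then used for the opposite purpose from yours: the interpolating $f$ (vanishing at the intermediate $z_{k}$, with phases aligned at $k=0$ and $k=n$), multiplied by $z^{m}$ and the kernel-type function $g=(1-|z_{0}|^{2})^{\alpha+1}/(1-\bar{z_{0}}z)^{2\alpha}$, yields a test function $h\in HB_{m}(\alpha)$ giving the \emph{lower} bound $\|L_{\lambda}\|_{HB_{m}(\alpha)}\geq\|\delta_{z_{0}}\|_{HB_{m}(\alpha)}/(\alpha^{\prime}MC(\alpha))$. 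Your reading of the role of $HB_{m}(\alpha)$ is also off: $m$ is not taken large so the test functions ``escape any fixed finite-codimensional subspace''; rather $HB_{m}(\alpha)$ is a fixed $C_{\varphi}$-invariant subspace of finite codimension (because $\varphi(0)=0$, so $\varphi(z)=z\psi(z)$) with $\sigma_{HB_{m}(\alpha)}(C_{\varphi})\subset\sigma_{HB(\alpha)}(C_{\varphi})$ by Lemma 7.17 of Cowen--MacCluer, and $m$ is chosen large so that $b^{m}/|\lambda|$ is small, i.e.\ so that the factor $|w|^{m}$ in the evaluation estimate of Lemma \ref{lem11} makes the tail $\sum_{k>n}(b^{m}/|\lambda|)^{k-n}$ converge and remain dominated by the main terms. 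Finally, the essential norm formula for $C_{\varphi_{K}}$ supplies starting points $w$ with $|\varphi_{K}(w)|\geq\tfrac12$ and weight ratio at least $\mu^{K}$ for $|\lambda|<\mu<r_{e,HB(\alpha)}(C_{\varphi})$, giving $\|(C_{m}^{\ast}-\lambda I)L_{\lambda}\|/\|L_{\lambda}\|\leq\alpha^{\prime}MC(\alpha)|\lambda|2^{m}(|\lambda|/\mu)^{K}\rightarrow0$, so $C_{m}^{\ast}-\lambda I$ is not bounded below. Without this passage to the dual (or some substitute that genuinely controls norms of approximate eigenvectors), your sketch does not close.
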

\begin{proof}
 Whenever, $\varphi(0)=0$, then $\{\varphi^{\prime}(0)^{n}\}_{n=0}^{\infty}$ is contained in the spectrum by before Lemma. If $\lambda\in \sigma_{HB(\alpha)}(C_{\varphi})$ and $|\lambda|>r_{e,HB(\alpha)}(C_{\varphi})$, then $\lambda$ is an eigenvalue of $C_{\varphi})$(this is true for all bounded operators; see, for example, Proposition $2.2$ in \cite{bs}). If $\lambda\neq 0$ is an eigenvalue of $C_{\varphi}$, then the Lemma \ref{lem14} gives that $\lambda=\varphi^{\prime}(0)^{n}$, for some $n$. Thus we need only show that

$$\{\lambda\in C: |\lambda|\leq r_{e,HB(\alpha)}(C_{\varphi})\}\subset \sigma_{HB(\alpha)}(C_{\varphi}).$$

If $r_{e,HB(\alpha)}(C_{\varphi})=0$, there is nothing to show since $0\in \sigma_{HB(\alpha)}(C_{\varphi})$ when $\varphi$ is not an automorphism. So we assume that $\rho=r_{e,HB(\alpha)}(C_{\varphi})>0.$ Since $\varphi(0)=0$, we have $\varphi(z)=z\psi(z)$, with $\psi\in H^{\infty}$. Hence $HB_{m}(\alpha)$ is an invariant subspace under $C_{\varphi})$ of finite codimension in $HB(\alpha)$. Because the spectrum of $C_{\varphi}$ is closed, we can choose $\lambda$ with $0<|\lambda|<\rho$. By Lemma $7.17$ in \cite{cm}, which is also valid for Banach spaces, gives that $\sigma_{HB_{m}(\alpha)}(C_{\varphi})\subset\sigma_{HB(\alpha)}(C_{\varphi})$. Thus it is enough to show that $\lambda\in \sigma_{HB_{m}(\alpha)}(C_{\varphi})$ for some $m$ to be found. Let $C_{m}$ denote the restriction of $C_{\varphi}$ to the invariant closed subspace $HB_{m}(\alpha)$. We find below $m$ such that $(C_{m}-\lambda I)^{\ast}$ is not bounded from below, which completes the proof. Proceeding as in the proof of theorem 7 in \cite{al}, let $1\leq M \leq\infty$ be the constant in Lemma \ref{lem13} for $r=\frac{1}{4}$. Iteration sequence will be denote by $(z_{k})_{k=-K}^{\infty}$ with $K>0$ and $|z_{0}|\geq\frac{1}{2}$. Noth that $(|z_{k}|)$ is decreasing. Let $n := \max\{k:|z_{k}|\geq\frac{1}{4}\}$. Then $n\geq 0$ and $|z_{k}|<\frac{1}{4}$ for $k>n$. By Lemma \ref{lem14} we may assume that $\frac{1}{2}<b<1$ such that $|z_{k}|\leq b^{k-n}|z_{n}|$ for $k\geq n$.

For fixed $1<\alpha,\alpha^{\prime}<\infty$ we now choose $m$ so large that

$$\frac{b^{m}}{|\lambda|}<\frac{9^{\alpha}}{2M\alpha^{\prime}16^{\alpha}4^{\alpha}}<\frac{1}{2}.$$

Given any iteration sequence $(z_{k})_{k=-K}^{\infty}$, we can define the linear functional on $HB_{m}(\alpha)$ by

$$L_{\lambda}(f)=\sum_{k=-K}^{\infty}\lambda^{-k}f(z_{k}).$$

We can see that $L_{\lambda}$ is bounded, because, with $n$ defined as above, for arbitrary $f\in HB_{m}(\alpha)$, we have
\begin{align*}
|L_{\lambda}(f)|&=|\sum_{k=-K}^{\infty}\lambda^{-k}f(z_{k})|\\
&\leq C(\alpha)\|f\|_{HB(\alpha)}\{\sum_{k=-K}^{n}|\lambda|^{-k}|z_{k}|^{m}(1-|z_{k}|^{2})^{-\alpha+1}+\\
&\sum_{k=n+1}^{\infty}|\lambda|^{-k}|z_{k}|^{m}(1-|z_{k}|^{2})^{-\alpha+1}\}.
\end{align*}

This is finite, since $|z_{k}|<\frac{1}{4}$ for $k>n$,  $|z_{k}|\leq b^{k-n}|z_{n}|$ for $k\geq n$ and $\frac{b^{m}}{|\lambda|}<1$. Therefore we get that
$$\leq(\frac{16}{15})^{\alpha-1}\frac{|z_{n}|^{m}}{|\lambda|^{n}}\sum_{k=n+1}^{\infty}(\frac{b^{m}}{|\lambda|})^{k-n}<\infty.$$

To show that $C_{m}^{\ast}-\lambda I$ is not bounded from below, we need to estimate

$$\frac{\|(C_{m}^{\ast}-\lambda I)L_{\lambda}\|_{HB_{m}(\alpha)}}{\|L_{\lambda}\|_{HB_{m}(\alpha)}}.$$

First notice that

$$(C_{m}^{\ast}-\lambda I)L_{\lambda}=-\lambda^{K+1}\delta_{z-k}.$$

Now we find a lower bound for $\|L_{\lambda}\|_{HB_{m}(\alpha)}$. For any iteration sequence $(z_{k})_{k=-K}^{\infty}$, we know that there is $f\in H^{\infty}$, $\|f\|_{\infty}\leqslant M$, satisfying
\begin{itemize}
\item  $|f(z_{k})|=1$, \text{for} $k=0$ and $k=n$,\\

\item $\frac{z_{k}^{m}f(z_{k})}{\lambda^{k}(1-\bar{z_{0}}z_{k})^2\alpha}\geq 0$,  \text{for} $k=0$ and $k=n$,\\

\item $f(z_{k})=0$  \text{for}  $-K\leq k<n, k\neq 0$.
\end{itemize}

For such $f$ we have
$$
L_{\lambda}(\frac{z^{m}f(z)(1-|z_{0}|^{2})^{\alpha+1}}{(1-\bar{z_{0}}z)^{2\alpha}})=\sum_{k=-K}^{\infty}\lambda^{-k}z_{k}^{m}f(z_{k})\frac{(1-|z_{0}|^{2})^{\alpha+1}}{(1-\bar{z_{0}}z_{k})^{2\alpha}}.$$

Since $\frac{(1-|z_{0}|^{2})^{\alpha}(1-|z|^{2})^{\alpha}}{(1-\bar{z_{0}}z)^{2\alpha}}<1$, then the function

$$g:=\frac{(1-|z_{0}|^{2})^{\alpha+1}}{(1-\bar{z_{0}}z)^{2\alpha}}$$

is in $HB(\alpha)$, with $\|g\|_{HB(\alpha)}<\alpha^{\prime}<\infty$.
Hence the function $h(z):=z^{m}f(z)g(z)$, belongs to $HB_{m}(\alpha)$ and $\|h\|_{HB(\alpha)}\leq\alpha^{\prime} M$.
 Now we get that
 % that the terms of this sum corresponding to $-K\leq k\leq n$ are zero unless $k=0$ or $k=n.$ Now we get
\begin{align*}
|\sum_{k=-K}^{\infty}\lambda^{-k}h(z_{k})|&\geq\frac{|z_{0}|^{m}(1-|z_{0}|^{2})}{(1-|z_{0}|^{2})^{\alpha}}\\
&+|\lambda|^{-n}|z_{n}|^{m}\frac{(1-|z_{0}|^{2})^{\alpha+1}}{(1-\bar{z_{0}}z_{n})^{2\alpha}})-|\sum_{k=n+1}^{\infty}\lambda^{-k}h(z_{k})|\\
&:= I+II-III.
\end{align*}

And so we have
$$II\geq\frac{|z_{n}|^{m}(1-|z_{0}|^{2})^{\alpha+1}}{|\lambda|^{n}4^{\alpha}}$$

and

$$III\leq\alpha^{\prime} M (\frac{16}{9})^{\alpha}\frac{|z_{n}|^{m}}{|\lambda|^{n}}\sum_{k=n+1}^{\infty}(\frac{b^{m}}{|\lambda|})^{k-n}(1-|z_{0}|^{2})^{\alpha+1},$$

because $\|f\|_{H^{\infty}}\leq M,$ $|z_{k}|<\frac{1}{4}$ and $|z_{k}|\leq b^{k-n}|z_{n}|$, for $k\geq n+1$, in the first inequality and

$$\sum_{k=n+1}^{\infty}(\frac{b^{m}}{|\lambda|})^{k-n}<\frac{\frac{9^{\alpha}}{2M\alpha^{\prime}16^{\alpha}4^{\alpha}}}{1-\frac{9^{\alpha}}{2M\alpha^{\prime}16^{\alpha}4^{\alpha}}}<\frac{9^{\alpha}}{M\alpha^{\prime}16^{\alpha}4^{\alpha}},$$

in the second. Consequently we get that

$$|L_{\lambda}(h)|\geq\frac{|z_{0}|^{m}}{(1-|z_{0}|^{2})^{\alpha-1}}.$$

And so

$$\|L_{\lambda}\|_{HB_{m}(\alpha)}\geq\frac{1}{\alpha^{\prime}M}\frac{|z_{0}|^{m}}{(1-|z_{0}|^{2})^{\alpha-1}}\geq\frac{\|\delta_{z_{0}}\|_{HB_{m}(\alpha)}}{\alpha^{\prime}M C(\alpha)}.$$

Here we recall that

$$\rho=r_{e,HB(\alpha)}(C_{\varphi})=\lim_{n}\|C_{\varphi}^{n}\|_{e,HB(\alpha)}^{\frac{1}{n}}$$

and

$$C_{\varphi}^{n}f(w)=C_{\varphi_{n}}f(w),   w\in D, f\in HB(\alpha),$$

where $\varphi_{n}=\varphi \circ...\circ o\varphi$ ($n$ times.) Hence, $C_{\varphi_{n}}:HB(\alpha)\rightarrow HB(\alpha)$ is a bounded composition operator and therefore
\begin{equation}\label{e1}
\|C_{\varphi_{n}}\|_{e,HB(\alpha)}=\lim_{r\rightarrow 1}\sup_{|\varphi_{n}(w)|>r}\frac{|\varphi_{n}^{\prime}(w)|(1-|w|^{2})^{\alpha}}{(1-|\varphi_{n}(w)|^{2})^{\alpha}}.
\end{equation}
Given $0<|\lambda|<\rho$, pick $\mu$ so that $|\lambda|<\mu<\rho$. Since $\rho$ is the essential spectral radius, there is $n_{0}$ such that for all $n\geq n_{0}$,

$$\|C_{\varphi_{n}}\|_{e,HB(\alpha)}>\mu^{n}.$$

Hence by \ref{e1}, for each $K>n_{0}$ we can find a $w\in D$ so that

$|\varphi_{K}^{\prime}(w)|(\frac{1-|w|^{2}}{1-|\varphi_{K}(w)|^{2}})^{\alpha}>\mu^{K}>0$  and  $|\varphi_{K}(w)|\geq\frac{1}{2}$.
Thus we have
\begin{align*}
\frac{\|\delta_{\varphi_{K}(w)}\|_{HB_{m}(\alpha)}}{\|\delta_{(w)}\|_{HB_{m}(\alpha)}}&\geq\frac{1}{2^{m}}\frac{\|\delta_{\varphi_{K}(w)}\|_{HB(\alpha)}}{\|\delta_{(w)}\|_{HB(\alpha)}}\\
&=\frac{1}{2^{m}}(\frac{1-|w|^{2}}{1-|\varphi_{K}(w)|^{2}})^{\alpha}\\
&\geq\frac{\mu^{K}}{2^{m}|\varphi_{K}^{\prime}(w)|}.
\end{align*}

For every $K\leq n_{0}$, with this choice of $w$ we can define the iteration sequence $(z_{k})_{k=-K}^{\infty}$ by letting $z_{k}=w$ and $z_{k+1}=\varphi(z_{k})$, for $k\geq-K.$ Hence $|z_{0}|=|\varphi_{K}(w)|\geq\frac{1}{2}$. and so
\begin{align*}
\frac{\|(C_{m}^{\ast}-\lambda I)L_{\lambda}\|_{HB_{m}(\alpha)}}{\|L_{\lambda}\|_{HB_{m}(\alpha)}}&\leq\frac{\alpha^{\prime}M C(\alpha)}{|\delta_{z_{0}}\|_{HB_{m}(\alpha)}}|\lambda|^{K+1}|\delta_{z-K}\|_{HB_{m}(\alpha)}\\
&\leq\alpha^{\prime} M C(\alpha)|\lambda|2^{m}(\frac{|\lambda|}{\mu})^{K}.
\end{align*}
Choosing $K\geq n_{0}$ big enough, it follows that $C_{m}^{\ast}-\lambda I$ is not bounded from below.(\cite{al})
\end{proof}

When $C_{\varphi}:HB_{0}(\alpha)\rightarrow HB_{0}(\alpha)$ is bounded, we get that  $C_{\varphi}=C_{\varphi}^{\ast\ast}:HB(\alpha)\rightarrow HB(\alpha)$ is also bounded and we can apply Theorem \ref{t15}. Furthermore, $C_{\varphi}^{n}$ is bounded on both $HB_{0}(\alpha)$ and $HB(\alpha)$ and $C_{\varphi}^{n}f(w)=C_{\varphi_{n}}f(w)$. Therefore $C_{\varphi}^{n}$ is a bounded composition operator on both $HB_{0}(\alpha)$ and $HB(\alpha)$. Hence using Theorem \ref{t15} and its proof we obtain that the essential norm of $C_{\varphi}^{n}$ on $HB_{0}(\alpha)$ coincides with the essential norm of $C_{\varphi}^{n}$ on $HB(\alpha)$, when $C_{\varphi}$ is bounded on $HB_{0}(\alpha)$. Consequently we get that $r_{e,HB_{0}(\alpha)}(C_{\varphi})=r_{e,HB(\alpha)}(C_{\varphi})$ and we can formulate the following result.
\begin{cor}\label{t7}
Let $\alpha>1$ and suppose that $\varphi$, not an automorphism, fixes the origin and that $C_{\varphi}:HB_{0}(\alpha)\rightarrow HB_{0}(\alpha)$ is bounded. Then,

$$\sigma_{HB_{0}(\alpha)}(C_{\varphi})=\{\lambda\in C: |\lambda|\leq r_{e,HB_{0}(\alpha)}(C_{\varphi})\}\cup \{\varphi^{\prime}(0)^{n}\}_{n=0}^{\infty}.$$
\end{cor}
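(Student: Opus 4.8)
The plan is to deduce the little-space result directly from Theorem \ref{t15} by a second-adjoint (bidual) argument, using the standard identification $HB_0(\alpha)^{\ast\ast}\cong HB(\alpha)$. First I would record that, since $C_{\varphi}$ is bounded on $HB_0(\alpha)$, its second adjoint $C_{\varphi}^{\ast\ast}$ is a bounded operator on $HB_0(\alpha)^{\ast\ast}\cong HB(\alpha)$, and that under this identification $C_{\varphi}^{\ast\ast}$ acts precisely as the composition operator $C_{\varphi}$ on $HB(\alpha)$. Indeed, $C_{\varphi}^{\ast\ast}$ is the weak-$\ast$ continuous extension of $C_{\varphi}|_{HB_0(\alpha)}$, and since $C_{\varphi}f=f\circ\varphi$ agrees with this extension on all of $HB(\alpha)$, the two coincide. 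In particular $\varphi$ still fixes the origin, is not an automorphism, and $C_{\varphi}:HB(\alpha)\to HB(\alpha)$ is bounded, so Theorem \ref{t15} applies and gives
$$\sigma_{HB(\alpha)}(C_{\varphi})=\{\lambda:|\lambda|\le r_{e,HB(\alpha)}(C_{\varphi})\}\cup\{\varphi'(0)^{n}\}_{n=0}^{\infty}.$$

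Next I would invoke the elementary Banach-space fact that a bounded operator, its adjoint, and its second adjoint all have the same spectrum; applied here this yields
$$\sigma_{HB_0(\alpha)}(C_{\varphi})=\sigma_{HB_0(\alpha)^{\ast\ast}}(C_{\varphi}^{\ast\ast})=\sigma_{HB(\alpha)}(C_{\varphi}).$$
Combined with the previous display, this already identifies the spectrum on the little space as $\{\lambda:|\lambda|\le r_{e,HB(\alpha)}(C_{\varphi})\}\cup\{\varphi'(0)^{n}\}_{n=0}^{\infty}$, so it remains only to replace the essential spectral radius computed on $HB(\alpha)$ by the one computed on $HB_0(\alpha)$.

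For that last equality I would argue that the two essential spectral radii coincide. Since $C_{\varphi}^{n}f=f\circ\varphi_{n}=C_{\varphi_{n}}f$ with $\varphi_{n}=\varphi\circ\cdots\circ\varphi$, each iterate $C_{\varphi_{n}}$ is a bounded composition operator on both $HB_0(\alpha)$ and $HB(\alpha)$, and on each space its essential norm is given by the same limit formula
$$\|C_{\varphi_{n}}\|_{e}=\lim_{r\to 1}\sup_{|\varphi_{n}(w)|>r}\frac{|\varphi_{n}'(w)|(1-|w|^{2})^{\alpha}}{(1-|\varphi_{n}(w)|^{2})^{\alpha}}$$
as in \eqref{e1}, which does not reference whether we work in the big or the little space. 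Hence $\|C_{\varphi_{n}}\|_{e,HB_0(\alpha)}=\|C_{\varphi_{n}}\|_{e,HB(\alpha)}$ for every $n$, and taking $n$-th roots and passing to the limit in $r_{e}=\lim_{n}\|C_{\varphi}^{n}\|_{e}^{1/n}$ gives $r_{e,HB_0(\alpha)}(C_{\varphi})=r_{e,HB(\alpha)}(C_{\varphi})$. Substituting this into the spectrum formula produces the claimed identity.

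The genuinely routine pieces are the spectral identity $\sigma(T)=\sigma(T^{\ast\ast})$ and the bidual identification $HB_0(\alpha)^{\ast\ast}\cong HB(\alpha)$. The main obstacle, and the step deserving the most care, is justifying that the essential norm of the composition operator is computed by the \emph{same} geometric formula in the little space as in the big space. This requires checking that the test functions used in the proof of Theorem \ref{t15} can be chosen to lie in $HB_0(\alpha)$ (or that compact approximation is no more costly there), so that restricting to the little space loses no mass in the supremum defining $\|C_{\varphi_{n}}\|_{e}$; once that is secured, the equality of essential spectral radii, and hence of the full spectra, follows.
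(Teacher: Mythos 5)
Your proposal is correct and follows essentially the same route as the paper: the paper likewise identifies $C_{\varphi}^{\ast\ast}$ on $HB_{0}(\alpha)^{\ast\ast}\cong HB(\alpha)$ with the composition operator $C_{\varphi}$ on $HB(\alpha)$, applies the big-space spectrum theorem, and then equates $r_{e,HB_{0}(\alpha)}(C_{\varphi})$ with $r_{e,HB(\alpha)}(C_{\varphi})$ by noting that each iterate $C_{\varphi}^{n}=C_{\varphi_{n}}$ is a bounded composition operator on both spaces whose essential norm is given by the same formula \eqref{e1}. Your explicit invocation of $\sigma(T)=\sigma(T^{\ast\ast})$ and your flagging of the need to verify that the essential-norm test functions can be taken in $HB_{0}(\alpha)$ merely make explicit two steps the paper leaves implicit.
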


\section{When $C_{\varphi}$ is an isometry?}
In this section we characterize isometric composition operators harmonic Bloch spaces. Here we recall that for $\alpha>0$, and $\varphi$ being an analytic self-map of $D$,
$$\tau_{\varphi,\alpha}(z)=\frac{(1-|z|^2)^\alpha|\varphi^{'}(z)|}{(1-|\varphi(z)|^2)^\alpha}.$$
We write $\tau_{\varphi}$ if $\alpha=1$. Also, we recall Schwartz-Pick Lemma, states that if $\varphi$ is a self-map of $D$, then

$$\sup_{z\in D}\frac{(1-|z|^2)|\varphi^{'}(z)|}{1-|\varphi(z)|^{2}}\leq1,$$

and there is equality at one point if and only if there is equality at every point in $D$ if and only if $\varphi$ is a disk automorphism.

 Now we consider the case $\alpha=1$ and provide an equivalent condition for $C_{\varphi}$ to be an isometry.
\begin{thm}\label{t15}
Let $\varphi$ be an analytic self-map of $D$ and let $\alpha=1$. Then the composition operator $C_{\varphi}$ is an isometry on the harmonic $\alpha$-Bloch space $HB(\alpha)$ if and only if $\varphi(0)=0$ and either $\varphi$ is a rotation, (whenever $\varphi^{'}$ is bounded or $\varphi$ is univalent), or that $\varphi$ is such that for every $a$ in $D$ there exists a sequence $\{z_{n}\}$ in $D$ such that $|z_{n}|\rightarrow1$, $\varphi(z_{n})\rightarrow a$ and $\tau_{\varphi}(z_{n})\rightarrow1.$
\end{thm}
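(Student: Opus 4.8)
The plan is to work throughout with the canonical decomposition $f=h+\bar g$ ($h,g$ analytic), for which $f_z=h'$ and $f_{\bar z}=\overline{g'}$, and to use that, because $\varphi$ is analytic, $C_\varphi f=(h\circ\varphi)+\overline{(g\circ\varphi)}$ again has analytic parts. Differentiating gives, for $\alpha=1$,
$$\|C_\varphi f\|_{HB(1)}=\sup_{z\in D}\tau_\varphi(z)\,(1-|\varphi(z)|^2)\bigl[\,|h'(\varphi(z))|+|g'(\varphi(z))|\,\bigr].$$
Since the Schwarz--Pick lemma gives $\tau_\varphi(z)\le1$ everywhere, this already shows $\|C_\varphi f\|_{HB(1)}\le\|f\|_{HB(1)}$ for every $f$, so $C_\varphi$ is automatically a seminorm contraction and the whole problem is to decide when the supremum is attained in the limit. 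I would first dispose of the point-evaluation term: writing the isometry as $|f(\varphi(0))|+\|C_\varphi f\|=|f(0)|+\|f\|$ and using $\|C_\varphi f\|\le\|f\|$ gives $|f(\varphi(0))|\ge|f(0)|$ for all $f$; the choice $f(z)=z-\varphi(0)$ makes the left side $0$ and the right side $|\varphi(0)|$, forcing $\varphi(0)=0$. Once $\varphi(0)=0$ the term $|C_\varphi f(0)|=|f(0)|$ matches automatically, so full-norm isometry is equivalent to the seminorm identity $\|C_\varphi f\|_{HB(1)}=\|f\|_{HB(1)}$ for all $f$.

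For sufficiency I would treat the two cases separately. If $\varphi$ is a rotation then $\tau_\varphi\equiv1$ and the change of variables $w=\varphi(z)$, which preserves $|z|$, gives $\|C_\varphi f\|=\|f\|$ at once. If instead $\varphi$ satisfies the sequence condition, I fix $f$ and $\eps>0$, choose $a\in D$ with $(1-|a|^2)[|h'(a)|+|g'(a)|]>\|f\|-\eps$, take the sequence $\{z_n\}$ supplied by the hypothesis at this $a$, and pass to the limit in the displayed formula: $\tau_\varphi(z_n)\to1$ and $\varphi(z_n)\to a$ (with $h',g'$ continuous at the interior point $a$) yield $\|C_\varphi f\|\ge(1-|a|^2)[|h'(a)|+|g'(a)|]>\|f\|-\eps$, whence $\|C_\varphi f\|\ge\|f\|$ and equality.

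The heart of the argument is the necessity of the dichotomy. Assuming $\varphi(0)=0$ and $\|C_\varphi f\|=\|f\|$ for all $f$, I would probe $\tau_\varphi$ near an arbitrary $a\in D$ using the \emph{analytic} test function $f_a$ with $h_a'(w)=(1-|a|^2)/(1-\bar a w)^2$ and $g_a=0$, for which $(1-|w|^2)|h_a'(w)|=1-|\sigma_a(w)|^2$ with $\sigma_a(w)=(a-w)/(1-\bar a w)$, so that $\|f_a\|=1$, attained only at $w=a$. Isometry then forces $\sup_z\tau_\varphi(z)\,(1-|\sigma_a(\varphi(z))|^2)=1$, and since both factors are $\le1$ this produces a sequence $\{z_n\}$ with $\tau_\varphi(z_n)\to1$ and $\varphi(z_n)\to a$. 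Now the dichotomy: if some such sequence has $|z_n|\to1$ we have exactly the stated boundary condition at $a$; otherwise every such sequence is confined to a compact subset of $D$, a subsequential interior limit $z_0$ satisfies $\tau_\varphi(z_0)=1$, and the equality clause of Schwarz--Pick makes $\varphi$ a disk automorphism, hence (as $\varphi(0)=0$) a rotation. Thus a non-rotation isometry satisfies the sequence condition for every $a$.

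Finally I would show the two side hypotheses collapse the second alternative into the first, which I expect to be the main obstacle. If $\varphi'$ is bounded, the sequence condition is impossible: $|z_n|\to1$ and $\varphi(z_n)\to a\in D$ force $(1-|z_n|^2)|\varphi'(z_n)|\to0$ while $1-|\varphi(z_n)|^2\to1-|a|^2>0$, so $\tau_\varphi(z_n)\to0\ne1$. If $\varphi$ is univalent, I would rewrite $\tau_\varphi$ through the hyperbolic metric: conformal invariance on $\varphi\colon D\to\Omega:=\varphi(D)$ gives $\tau_\varphi(z)=\lambda_D(\varphi(z))/\lambda_\Omega(\varphi(z))$, a function of $\varphi(z)$ alone, where $\lambda_D,\lambda_\Omega$ are the hyperbolic densities; strict monotonicity ($\lambda_\Omega>\lambda_D$ on $\Omega$ whenever $\Omega\subsetneq D$, and $\lambda_\Omega\to\infty$ at $\partial\Omega$) then forces $\tau_\varphi(z_n)\to\lambda_D(a)/\lambda_\Omega(a)<1$ or $\to0$, again excluding the sequence condition unless $\Omega=D$, i.e. unless $\varphi$ is an automorphism and so a rotation. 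The delicate points to get right are the compactness and subsequence bookkeeping in the dichotomy and the clean reduction to the hyperbolic-metric identity in the univalent case; the remaining steps are the contraction estimate and routine limits.
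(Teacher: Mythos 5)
Your proof is correct, and while it follows the same overall skeleton as the paper (Schwarz--Pick gives the contraction, test functions concentrated at $a$ force $\sup_z\tau_\varphi(z)(1-|\varphi_a(\varphi(z))|^2)=1$, and the equality clause of Schwarz--Pick drives the dichotomy between interior accumulation/rotation and boundary sequences), several of your ingredients genuinely differ from the paper's and are cleaner. For $\varphi(0)=0$ you use $f(z)=z-\varphi(0)$, whereas the paper uses the harmonic function $\bar a\varphi_a+a\overline{\varphi_a}$ with $a=\varphi(0)$; both work, yours more simply. For the necessity of the sequence condition you test with the purely analytic $f_a$ (essentially $-\varphi_a$, with co-analytic part zero), exploiting the identity $(1-|w|^2)|h_a'(w)|=1-|\varphi_a(w)|^2$ so that $\|f_a\|_{HB(1)}=1$ exactly; the paper instead tests with the genuinely harmonic $\bar a\varphi_a+a\overline{\varphi_a}$ and computes its norm as $2|a|^2+2|a|$. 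Since $C_\varphi$ preserves the decomposition $f=h+\bar g$, analytic test functions suffice, and your choice shortens the norm computation. In the sufficiency direction your uniform $\varepsilon$-near-maximizer argument (pick $a\in D$ with $(1-|a|^2)[|h'(a)|+|g'(a)|]>\|f\|-\varepsilon$, which exists whether or not the supremum is attained) elegantly avoids the paper's case split between an interior maximum and a sequence $|a_n|\to 1$, and in doing so quietly repairs the paper's loosely stated step ``for each point $a_n$ there exists a point $z_n$ such that $\varphi(z_n)\rightarrow a_n$.'' For the collapse under the side hypotheses: your bounded-$\varphi'$ argument is essentially the paper's (the paper runs it at $a=0$, where the sequence with $\varphi(z_n)\to 0$ arises from $\|\varphi\|_{HB}=1$); for univalent $\varphi$ the paper gives an elementary local-inverse argument (if $|z_n|\to1$ and $\varphi(z_n)\to0$ then $z_n=\varphi^{-1}(\varphi(z_n))\to0$, a contradiction), while you invoke the conformal invariance $\tau_\varphi=\lambda_D\circ\varphi/\lambda_\Omega\circ\varphi$, strict comparison of densities for $\Omega\subsetneq D$, and the Koebe blow-up of $\lambda_\Omega$ at $\partial\Omega$. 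Your route uses heavier machinery but rules out the sequence condition at every $a$, not just $a=0$; both are valid since the contradiction at a single $a$ suffices. The one point to tighten is the subsequence bookkeeping you flagged: from a sequence with $\tau_\varphi(z_n)\to1$ and $\varphi(z_n)\to a$, either $\limsup|z_n|=1$ and a subsequence realizes the boundary condition at $a$ (all three limits persist along subsequences), or $\sup_n|z_n|<1$ and a convergent subsequence yields $\tau_\varphi(z_0)=1$ at an interior point, hence a rotation; as written (``otherwise every such sequence is confined to a compact subset'') this is slightly loose but routine to fix.
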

\begin{proof}
We first show that if  $C_{\varphi}$ is an isometry on harmonic Bloch space $HB$, then necessarily $\varphi(0)=0$. In view of the Schwarz-Pick Lemma, we have $\sup_{z\in D}\frac{(1-|z|^2)|\varphi^{'}(z)|}{1-|\varphi(z)|^{2}}\leq1$. So for every $f\in HB$ we have
\begin{align*}
\|f\circ\varphi\|_{HB(\alpha)}&=|f(\varphi(0))|+\sup_{z\in D}(1-|z|^2)|\varphi^{'}(z)|[|f_{z}(\varphi(z))|+|f_{\bar{z}}(\varphi(z))|]\\
&\leq |f(\varphi(0))|+\sup_{z\in D}(1-|\varphi(z)|^{2})[|f_{z}(\varphi(z))|+|f_{\bar{z}}(\varphi(z))|]\\
&=|f(\varphi(0))|-|f(0)|+\|f\|_{HB(\alpha)}.
\end{align*}

It follows that $|f(\varphi(0))|\geq|f(0)|$, for all $f\in HB$. Putting $\varphi(0)=a$ and choosing $f=\bar{a}\varphi_{a}+a\bar{\varphi_{a}}$, where $\varphi_{a}$ is the automorphism defined by

$$\varphi_{a}(z)=\frac{a-z}{1-\bar{a}z},  z\in D,$$

we get
\begin{align*}
0&=|(\bar{a}\varphi_{a}+a\bar{\varphi_{a}})(a)|\\
&=|f(\varphi(0))|\geq|f(0)|\\
&=|(\bar{a}\varphi_{a}+a\bar{\varphi_{a}})(0)|\\
&=2|a|^2,
\end{align*}

hence $\varphi(0)=0$. However, the identity function $I(z)=z$ belongs to each of the harmonic Bloch spaces and has norm one. Thus, since $C_{\varphi}$ is an isometry, we get that $\|\varphi\|_{HB}=\|C_{\varphi}I\|_{HB}=\|I\|_{HB}=1.$

Suppose that $C_{\varphi}$ is an isometry, as already shown above $\varphi(0)=0$ and $\|\varphi\|_{B(\alpha)}=1$, hence
\begin{align*}
&\sup_{z\in D}\frac{(1-|z|^2)}{1-|\varphi(z)|^{2}}|\varphi^{'}(z)|.(1-|\varphi(z)|^{2})\\
&=\sup_{z\in D}\tau_{\varphi}(z).(1-|\varphi(z)|^{2})=1.
\end{align*}

If $\varphi(z)\neq e^{i\theta}z$, by the Schwarz-pick Lemma, we have $\tau_{\varphi}(z)<1$, for $z\in D$. So there exists a sequence $\{z_{n}\}$ such that $|z_{n}|\rightarrow1$, $\varphi(z_{n})\rightarrow 0$ and $\tau_{\varphi}(z_{n})\rightarrow1.$ Hence

$$|\varphi^{'}(z)|>\frac{1-|\varphi(z_{n})|^{2}}{2(1-|z_{n}|^2)}\rightarrow\infty,$$

for the sufficient large $n$. This contradicts the fact that $\varphi^{'}$ is bounded on $D$.
 Assume that $\varphi$ is univalent. Since $\varphi(0)=0$, then there exists a $\varepsilon>0$ such that $|\varphi(z_{n})|>\varepsilon$ for any sequence $|z_{n}|\rightarrow1$ with the sufficient large $n$. If $\varphi(z)\neq e^{i\theta}z$, by the above observarions, we have a contradiction.
 If $C_{\varphi}$ is an isometry on $HB(\alpha)$ and $\varphi$ fixes the origin but is not a rotation, then $\varphi$ cannot be a disk automorphism. Since every disk automorphism must have the form $\psi=\lambda\varphi_{a}$ for some complex number $\lambda$ of modulus one and some $a$ in $D$, where $\varphi_{a}$ is the automorphism defined by

$$\varphi_{a}(z)=\frac{a-z}{1-\bar{a}z},  z\in D.$$

Choosing $f=\bar{a}\varphi_{a}+a\bar{\varphi_{a}}$ we get
\begin{align*}
\|\bar{a}\varphi_{a}+a\bar{\varphi_{a}}\|_{HB(\alpha)}&=|(\bar{a}\varphi_{a}+a\bar{\varphi_{a}})(0)|+\sup_{z\in D}2|a||\varphi_{a}^{'}(z)|(1-|z|^2)\\
&=2|a|^2+2|a|\sup_{z\in D}(1-|\varphi_{a}(z)|^2)=2|a|^2+2|a|.
\end{align*}

Since
\begin{align*}
\|f\|_{HB(\alpha)}&=\|(\bar{a}\varphi_{a}+a\bar{\varphi_{a}})o\varphi\|_{HB(\alpha)}\\
&=|(\bar{a}\varphi_{a}+a\bar{\varphi_{a}})(\varphi(0))|+\sup_{z\in D}2|a||\varphi^{'}(z)|(1-|z|^2)|\varphi_{a}^{'}(\varphi(z))|\\
&=2|a|^2+2|a|\sup_{z\in D}\tau_{\varphi}(z)(1-|\varphi(z)|^{2})|\varphi_{a}^{'}(\varphi(z))|\\
&=2|a|^2+2|a|\sup_{z\in D}\tau_{\varphi}(z)(1-|\varphi_{a}(\varphi(z))|^{2})\\
&\leq2|a|^2+2|a|\\
&=\|\bar{a}\varphi_{a}+a\bar{\varphi_{a}}\|_{HB(\alpha)}\\
&=\|f\|_{HB(\alpha)}.
\end{align*}

It follows that

$$\sup_{z\in D}\tau_{\varphi}(z)(1-|\varphi_{a}(\varphi(z))|^{2})=1.(1)$$

Recall that both quantities $\tau_{\varphi}(z)$ and $(1-|\varphi_{a}(\varphi(z))|^{2})$ are always bounded above by $1$ and not that supremum in $(1)$ cannot be achieved at any point $z_{0}$ in $D$: if this were the case we would have $\tau_{\varphi}(z_{0})=1$, so $\varphi$ would have to be a disk automorphism, a case already excluded. Thus there exists at least one sequence $\{z_{n}\}$ in $D$ such that $\lim_{n\rightarrow\infty}\tau_{\varphi}(z_{n})=1$ and at the same time $1-|\varphi_{a}(\varphi(z_{n}))|^{2}\rightarrow1$ that is, $\varphi(z_{n})\rightarrow a$ as $n\rightarrow\infty$. It is clear by continuity of $\tau_{\varphi}$ that the sequence $\{z_{n}\}$ cannot accumulate inside $D$ (for otherwise it would again follow that $\varphi$ is an automorphism), hence $|z_{n}|\rightarrow1$ as $n\rightarrow\infty$, which is what we needed to prove.
It is plain that every rotation generates a composition operator which is an isometry of $HB(\alpha)$, So
we now check the sufficiency for those self-maps $\varphi$ that fix the origin but are not rotations. For an arbitrary function in $HB$ the inequality
\begin{align*}
\|fo\varphi\|_{HB(\alpha)}&=|f(\varphi(0))|+\sup_{z\in D}(1-|z|^2)|\varphi^{'}(z)|[|f_{z}(\varphi(z))|+|f_{\bar{z}}(\varphi(z))|]\\
&\leq|f(0)|+\sup_{z\in D}(1-|\varphi(z)|^{2})[|f_{z}(\varphi(z))|+|f_{\bar{z}}(\varphi(z))|]\\
&=\|f\|_{HB(\alpha)},
\end{align*}

easily follows from the Schwarz-Pick Lemma. In order to verify the reverse inequality, we need to consider the two possible cases:
\begin{itemize}
  \item (i) $\sup_{z\in D}(1-|z|^2)[|f_{z}(z)|+|f_{\bar{z}}(z)|]$ is attained as a maximum at some point $a$ in $D$;
  \item $(ii)$ $\sup_{z\in D}(1-|z|^2)[|f_{z}(z)|+|f_{\bar{z}}(z)|]=\lim_{n\rightarrow\infty}(1-|a_{n}|^2)[|f_{z}(a_{n})|+|f_{\bar{z}}(a_{n})|]$, for some sequence $\{a_{n}\}$ such that $|a_{n}|\rightarrow1$.
\end{itemize}

The first case is analogous but slightly easier, so we only give a proof for the case $(ii)$. By our assumption, for each point $a_{n}$ there exists a point $z_{n}$ such that $\varphi(z_{n})\rightarrow a_{n}$. Hence, by the continuity of the function $(1-|z|^2)[|f_{z}(z)|+|f_{\bar{z}}(z)|]$ at each point $a_{n}$, for every $n$ we can find a point $z_{n}$ such that

$$|(1-|a_{n}|^2)[|f_{z}(a_{n})|+|f_{\bar{z}}(a_{n})|]-(1-|\varphi(z_{n})|^2)[|f_{z}(\varphi(z_{n}))|+|f_{\bar{z}}(\varphi(z_{n}))|]|<\frac{1}{n}$$

and $\tau_{\varphi}(z_{n})>1-\frac{1}{n}$. We thus, get for $\alpha=1,$
\begin{align*}
\|fo\varphi\|_{HB(\alpha)}&=|f(\varphi(0))|+\sup_{z\in D}\tau_{\varphi}(z)(1-|\varphi(z)|^{2})[|f_{z}(\varphi(z))|+|f_{\bar{z}}(\varphi(z))|]\\
&\geq|f(0)|+\limsup_{n\rightarrow\infty}(1-\frac{1}{n})(1-|\varphi(z_{n})|^2)[|f_{z}(\varphi(z_{n}))|+|f_{\bar{z}}(\varphi(z_{n}))|]\\
&\geq|f(0)|+\limsup_{n\rightarrow\infty}((1-|a_{n}|^2)[|f_{z}(a_{n})|+|f_{\bar{z}}(a_{n})|]-\frac{1}{n})\\
&=\|f\|_{HB(\alpha)}.
\end{align*}
This completes the proof.
\end{proof}
 Now in the net lemma we find a necessary condition for the composition operator $C_{\varphi}$ on the harmonic Bloch spaces $HB(\alpha)$ in case $\alpha>0$ to be an isometry.
\begin{lem}\label{lem11}
If $C_{\varphi}$ is an isometry on $HB(\alpha)$ with $\alpha>0$, then $\varphi(0)=0.$
\end{lem}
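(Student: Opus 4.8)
The plan is to test the isometry against two very simple explicit functions and avoid any Schwarz--Pick type contraction estimate, which is an $\alpha=1$ phenomenon and is not available for general $\alpha$. Throughout, recall that the isometry is with respect to the full norm $|||f|||_{HB(\alpha)}=|f(0)|+\|f\|_{HB(\alpha)}$, where $\|f\|_{HB(\alpha)}=\sup_{z\in D}(1-|z|^2)^\alpha[|f_{z}(z)|+|f_{\bar{z}}(z)|]$ is the pseudo-norm. Set $a=\varphi(0)$. The two test functions will be the identity $I(z)=z$ and its translate $g(z)=z-a$; both are analytic, hence harmonic, and lie in $HB(\alpha)$, and for each of them the pseudo-norm is simply $\sup_{z\in D}(1-|z|^2)^\alpha=1$, attained at the origin.

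First I would apply the isometry to $I$. Since $I(0)=0$ and $\|I\|_{HB(\alpha)}=1$, we have $|||I|||_{HB(\alpha)}=1$, while $C_{\varphi}I=\varphi$, so that $|||\varphi|||_{HB(\alpha)}=|a|+\|\varphi\|_{HB(\alpha)}$. The isometry then yields the first identity
$$|a|+\|\varphi\|_{HB(\alpha)}=1.$$
(Note $\varphi=C_{\varphi}I\in HB(\alpha)$ because an isometry is bounded, so $\|\varphi\|_{HB(\alpha)}<\infty$ is automatic.)

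Next I would apply the isometry to $g=I-a$. Here $g(0)=-a$ and $\|g\|_{HB(\alpha)}=1$, so $|||g|||_{HB(\alpha)}=|a|+1$. On the other hand $C_{\varphi}g=\varphi-a$ is analytic, vanishes at the origin because $(\varphi-a)(0)=\varphi(0)-a=0$, and has the same derivative as $\varphi$; hence its pseudo-norm equals $\|\varphi\|_{HB(\alpha)}$ and $|||C_{\varphi}g|||_{HB(\alpha)}=\|\varphi\|_{HB(\alpha)}$. The isometry now gives the second identity
$$\|\varphi\|_{HB(\alpha)}=|a|+1.$$

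Combining the two identities, I would substitute $\|\varphi\|_{HB(\alpha)}=|a|+1$ into $|a|+\|\varphi\|_{HB(\alpha)}=1$ to get $2|a|+1=1$, whence $|a|=0$, i.e. $\varphi(0)=0$. The only points that require care are the two routine facts that a constant shift does not change the pseudo-norm and that it kills the evaluation-at-$0$ term for $C_{\varphi}g$; these are immediate, since translation by a constant leaves $f_{z}$ and $f_{\bar{z}}$ unchanged. The conceptual obstacle to watch for is the temptation to imitate the $\alpha=1$ argument of the preceding theorem through the Schwarz--Pick inequality $\tau_{\varphi,\alpha}(z)\le 1$: this holds only when $\alpha=1$, so for general $\alpha>0$ one cannot compare $\|f\circ\varphi\|_{HB(\alpha)}$ with $\|f\|_{HB(\alpha)}$ by a contraction, and it is precisely the two-test-function device above that makes the argument uniform in $\alpha$.
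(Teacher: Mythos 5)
Your argument is correct, and it reaches the conclusion by a genuinely different pair of test functions than the paper. The paper tests the isometry against the truly harmonic functions $L(z)=z+\bar{z}$ and $f_{a}(z)=1-(\bar{a}z+a\bar{z})$: the first is used to extract the normalization $\sup_{z\in D}(1-|z|^2)^\alpha|\varphi'(z)|=1$, and the second, composed with $\varphi$, yields $\|C_{\varphi}f_{a}\|_{HB(\alpha)}=1-2|a|^{2}+2|a|$ against $\|f_{a}\|_{HB(\alpha)}=1+2|a|$, whence $-2|a|^{2}=0$. You instead use the analytic functions $I(z)=z$ and $g(z)=z-a$, whose pseudo-norms are trivially $1$, and play the two resulting identities $|a|+\|\varphi\|_{HB(\alpha)}=1$ and $\|\varphi\|_{HB(\alpha)}=|a|+1$ against each other, giving $2|a|=0$ at once; as a byproduct you also recover $\|\varphi\|_{HB(\alpha)}=1$, which is the normalization the paper carries into the subsequent isometry theorems. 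Your bookkeeping is in fact tighter than the paper's: in the paper's first computation the point-evaluation term $|(\varphi+\bar{\varphi})(0)|=2|\mathrm{Re}\,\varphi(0)|$ is silently dropped (it is known to vanish only after the lemma is proved), and $|f_{a}(a)|$ is written as $1-2|a|^{2}$ without an absolute value, which is only valid for $|a|\leq 1/\sqrt{2}$; your translation trick makes the evaluation term of $C_{\varphi}g$ vanish exactly, so no such corrections are ever needed. Your closing remark is also apt: the Schwarz--Pick bound $\tau_{\varphi,\alpha}\leq 1$ is special to $\alpha=1$, so any proof of this lemma for general $\alpha>0$ must, as both you and the paper do, rely on explicit test functions rather than a contraction estimate, and your choice makes the argument uniform in $\alpha$ with less case analysis.
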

\begin{proof}
Note first that the function $L(z)=z+\bar{z}$ belongs to each of the harmonic Bloch-type space(harmonic $\alpha$-Bloch space) and $\|L\|_{HB(\alpha)}=2.$ Thus, since $C_{\varphi}$ is an isometry,
\begin{align*}
2&=\|C_{\varphi}L\|_{HB(\alpha)}\\
&=\|\varphi+\bar{\varphi}\|_{HB(\alpha)}\\
&=\sup_{z\in D}2|\varphi^{'}(z)|(1-|z|^2)^\alpha.
\end{align*}

Hence, $\sup_{z\in D}|\varphi^{'}(z)|(1-|z|^2)^\alpha=1.$ Suppose that $\varphi(0)=a\neq0.$ Using that function $f_{a}=1-(\bar{a}z+a\bar{z})$, we see that
\begin{align*}
\|C_{\varphi}f_{a}\|_{HB(\alpha)}&=\|f_{a}o\varphi\|_{HB(\alpha)}\\
&=|f_{a}(a)|+\sup_{z\in D}(1-|z|^2)^\alpha|\varphi^{'}(z)|[|(f_{a})_{z}(\varphi(z))|+|(f_{a})_{\bar{z}}(\varphi(z))|]\\
&=1-2|a|^{2}+2|a|\sup_{z\in D}(1-|z|^2)^\alpha|\varphi^{'}(z)|\\
&=1+2|a|-2|a|^{2}.
\end{align*}

But since $C_{\varphi}$ is an isometry and
\begin{align*}
\|f_{a}\|_{HB(\alpha)}&=|f_{a}(0)|+\sup_{z\in D}(1-|z|^2)^\alpha[|(f_{a})_{z}(z)|+|(f_{a})_{\bar{z}}(z)|]\\
&=1+2|a|\sup_{z\in D}(1-|z|^2)^\alpha=1+2|a|,
 \end{align*}
 then $-2|a|^{2}=0$ and we get a contradiction to $a\neq0$. Therefore $\varphi(0)=0$ and $\|\varphi\|_{B(\alpha)}=1$. The Lemma is proved.
\end{proof}
We will show that the only isometric composition operators on the harmonic Bloch-type spaces, other than the  harmonic Bloch spaces, are induced by rotations. In the proof, we use two different ideas for the cases $0<\alpha<1$ and $\alpha>1$, and we divide the Theorem correspondingly.
\begin{thm}\label{t15}
Let $\varphi$ be an analytic self-map of $D$. If $\alpha>0$ and $\alpha\neq1$, Then the composition operator $C_{\varphi}$ is an isometry on $HB(\alpha)$ if and only if $\varphi$ is a rotation.
\end{thm}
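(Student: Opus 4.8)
The plan is to prove both implications, handling the nontrivial direction through the dichotomy $\alpha>1$ versus $0<\alpha<1$ announced before the statement. Sufficiency is routine: if $\varphi(z)=e^{i\theta}z$ then $|\varphi'|\equiv1$ and $\varphi(0)=0$, so for $f=h+\bar g\in HB(\alpha)$ the substitution $w=e^{i\theta}z$ turns $\sup_{z}(1-|z|^2)^\alpha[|h'(e^{i\theta}z)|+|g'(e^{i\theta}z)|]$ into the $\alpha$-Bloch seminorm of $f$, while $f(\varphi(0))=f(0)$; hence $C_\varphi$ is an isometry. For necessity I would start from the preceding Lemma \ref{lem11}, which already gives $\varphi(0)=0$ and $\sup_{z\in D}(1-|z|^2)^\alpha|\varphi'(z)|=1$. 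Writing $\Phi(z)=(1-|z|^2)^\alpha|\varphi'(z)|$ and $\tau_{\varphi,\alpha}(z)=\frac{(1-|z|^2)^\alpha|\varphi'(z)|}{(1-|\varphi(z)|^2)^\alpha}$, the aim is to upgrade ``$\sup\Phi=1$'' to ``$|\varphi'(0)|=1$'', after which the Schwarz lemma forces $\varphi(z)=\lambda z$ with $|\lambda|=1$.

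For $\alpha>1$ this needs no further test functions. Since $\varphi(0)=0$, Schwarz gives $|\varphi(z)|\le|z|$, so $\frac{1-|z|^2}{1-|\varphi(z)|^2}\le1$; combined with Schwarz--Pick ($\tau_{\varphi,1}\le1$) and the identity $\tau_{\varphi,\alpha}=\tau_{\varphi,1}\left(\frac{1-|z|^2}{1-|\varphi(z)|^2}\right)^{\alpha-1}$, both factors are $\le1$ when $\alpha>1$, so $\tau_{\varphi,\alpha}\le1$ and $\Phi(z)\le(1-|\varphi(z)|^2)^\alpha$. A sequence with $\Phi(z_n)\to1$ then forces $\varphi(z_n)\to0$ and $\tau_{\varphi,\alpha}(z_n)\to1$. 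If $|z_n|\to1$ the factor $\left(\frac{1-|z_n|^2}{1-|\varphi(z_n)|^2}\right)^{\alpha-1}\to0$, contradicting $\tau_{\varphi,\alpha}(z_n)\to1$; hence $z_n\to z^*\in D$ along a subsequence, with $\varphi(z^*)=0$ and $(1-|z^*|^2)^\alpha|\varphi'(z^*)|=1$. Comparing with Schwarz--Pick, $|\varphi'(z^*)|\le(1-|z^*|^2)^{-1}$, forces $(1-|z^*|^2)^{1-\alpha}\le1$, which for $\alpha>1$ holds only at $z^*=0$; thus $|\varphi'(0)|=1$ and $\varphi$ is a rotation.

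For $0<\alpha<1$ the bound $\tau_{\varphi,\alpha}\le1$ is no longer automatic, and the full isometry must be used. Here I would feed in the family of test functions $f_a$ with $f_a'(w)=\frac{(1-|a|^2)^\alpha}{(1-\bar a w)^{2\alpha}}$, whose $\alpha$-Bloch seminorm equals $1$ and is attained only at $w=a$ (via $1-|\varphi_a(w)|^2=\frac{(1-|a|^2)(1-|w|^2)}{|1-\bar a w|^2}$). Isometry then yields, for every $a\in D$,
\[
\sup_{z\in D}\tau_{\varphi,\alpha}(z)\left(1-|\varphi_a(\varphi(z))|^2\right)^\alpha=1,
\]
exactly the analogue of equation $(1)$ in the proof of the $\alpha=1$ theorem. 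Evaluating the supremand at a point $z_0$ with the choice $a=\varphi(z_0)$ gives $\tau_{\varphi,\alpha}(z_0)\le1$, so the pointwise bound $\tau_{\varphi,\alpha}\le1$ is recovered for every $z$; equivalently $|\varphi^h(z)|^{1/(1-\alpha)}\le\frac{1-|z|^2}{1-|\varphi(z)|^2}$, where $\varphi^h$ denotes the hyperbolic derivative. The displayed identity also forces a covering property: for each $a$ there are $z_n$ with $\varphi(z_n)\to a$ and $\tau_{\varphi,\alpha}(z_n)\to1$.

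The final step, which I expect to be the main obstacle, is to show that, unlike when $\alpha=1$, these two conditions are compatible only with rotations once $0<\alpha<1$. The sub-unit exponent is what creates the rigidity: the reformulated bound shows $|\varphi'(z)|$ cannot grow like $(1-|z|^2)^{-1}$ near $\partial D$ unless $|\varphi(z)|\to1$ there, so inner (in particular finite Blaschke) non-rotations are excluded at once, since for them $|\varphi^h|\to1$ while $|\varphi'|\to\infty$. The delicate remaining case is a general $\varphi$ whose covering sequences for some interior $a$ escape to $\partial D$; to rule this out I would combine the covering property at $a=0$ (which forces $\varphi(z_n)\to0$ and, via the bound, $|\varphi^h(z_n)|\asymp(1-|z_n|^2)^{1-\alpha}\to0$) with a normal-families argument, using that for $0<\alpha<1$ every $\alpha$-Bloch function is H\"older continuous of order $1-\alpha$ on $\overline D$. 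The goal is to produce an interior $z^*$ with $\tau_{\varphi,\alpha}(z^*)=1$ and $\varphi(z^*)=0$ and then to exclude $z^*\neq0$, giving $|\varphi'(0)|=1$ and again a rotation; making this exclusion rigorous is the technical heart of the $0<\alpha<1$ case.
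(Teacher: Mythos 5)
Your sufficiency computation and your $\alpha>1$ argument are correct, and the latter is essentially the paper's own proof: the paper factors $(1-|z|^2)^\alpha|\varphi'(z)|=(1-|z|^2)^{\alpha-1}\cdot\frac{(1-|z|^2)|\varphi'(z)|}{1-|\varphi(z)|^2}\cdot(1-|\varphi(z)|^2)$, observes that all three factors are $\le 1$ with the first tending to $0$ as $|z|\to 1$, concludes that the supremum $1$ must be attained at an interior point, and then invokes Schwarz--Pick rigidity to get an automorphism fixing the origin, hence a rotation. Your subsequence bookkeeping (forcing $\varphi(z_n)\to 0$, excluding $|z_n|\to 1$, passing to an interior limit $z^*$, and then using $1=(1-|z^*|^2)^\alpha|\varphi'(z^*)|\le(1-|z^*|^2)^{\alpha-1}$ to force $z^*=0$) is a more careful rendering of the same idea.

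The genuine gap is the case $0<\alpha<1$, and it is not merely a missing detail: the mechanism you propose cannot close it. With $\varphi(z^*)=0$ and $\tau_{\varphi,\alpha}(z^*)=1$, Schwarz--Pick gives only $1=(1-|z^*|^2)^\alpha|\varphi'(z^*)|\le(1-|z^*|^2)^{\alpha-1}$, and for $\alpha<1$ the right-hand side is $\ge 1$ for \emph{every} $z^*\in D$ (the exponent is negative), so nothing excludes $z^*\ne 0$; likewise $\tau_{\varphi,\alpha}(z_0)=1$ at an interior point carries no rigidity when $\alpha\ne 1$, since Schwarz--Pick extremality applies only to $\tau_{\varphi,1}$ --- this also undercuts your claim that the supremum in your identity cannot be attained inside $D$, so even the ``covering property'' dichotomy is not established. (Your side remark that finite Blaschke non-rotations have $|\varphi'|\to\infty$ is also false: $\varphi'$ is bounded on $\overline{D}$ for a finite Blaschke product; such maps are excluded by the bound $\tau_{\varphi,\alpha}\le 1$ via the angular-derivative limit $\tau_{\varphi,\alpha}\to|\varphi'(\zeta)|^{1-\alpha}>1$, not by any blow-up of $\varphi'$.) The paper takes an entirely different route here: for $0<\alpha<1$ it uses the identification $B(\alpha)=Lip_{1-\alpha}$ with equivalent norms together with the iterates $\varphi^n=\varphi\circ\cdots\circ\varphi$ --- since $C_{\varphi^n}=C_\varphi^n$ is again an isometry, $\|\varphi^n\|_{HB(\alpha)}=1$ for all $n$, and the Lipschitz description of these norms is played against the Schwarz-lemma dynamics of the iterates of a non-rotation fixing the origin (the paper defers the details to Theorem 2.2 of Zorboska \cite{nz}). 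So your proposal diverges from the paper exactly where it stops being rigorous, and as written it proves the theorem only for $\alpha>1$.
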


CASE $0<\alpha<1$. For the proof of the characterization of isometric composition operator on spaces $HB(\alpha)$ with $0<\alpha<1$ we use the fact that $B(\alpha)=Lip_{1-\alpha}$ and that their norms are equivalent. We also use the $n$-th iteration of $\varphi$ defined by $\varphi^{n}=\varphi o\varphi o...o\varphi$, n times.

\begin{thm}\label{t15}
Let $0<\alpha<1$ and $\varphi$ be an analytic self-map of $D$. Then the composition operator $C_{\varphi}$ is an isometry on $HB(\alpha)$ if and only if $\varphi$ is a rotation.
\end{thm}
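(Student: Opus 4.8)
The plan is to prove sufficiency directly and to handle necessity by reducing to the analytic Bloch-type space and then exploiting the Lipschitz description together with the dynamics of the iterates $\varphi^{n}$.

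Sufficiency is immediate: if $\varphi(z)=e^{i\theta}z$, then $\varphi(0)=0$, $|\varphi'|\equiv 1$ and $|\varphi(z)|=|z|$, so the substitution $w=\varphi(z)$ in the defining supremum gives $\|f\circ\varphi\|_{HB(\alpha)}=\|f\|_{HB(\alpha)}$ for every $f$. For necessity, I first note that if $C_{\varphi}$ is an isometry on $HB(\alpha)$ it restricts to an isometry of the analytic Bloch-type space $B(\alpha)$: since $\varphi$ is analytic, $C_{\varphi}$ preserves the analytic subspace, and for analytic $f$ (where $f_{\bar z}=0$) the harmonic $\alpha$-Bloch norm coincides with the analytic one. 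So it suffices to show $\varphi$ is a rotation under the assumption that $C_{\varphi}$ is an isometry of $B(\alpha)$. By the preceding Lemma we already have $\varphi(0)=0$ and $\sup_{z\in D}(1-|z|^{2})^{\alpha}|\varphi'(z)|=1$. The first key step is that a power of an isometry is again one and $C_{\varphi}^{n}=C_{\varphi^{n}}$; testing on $f(z)=z$, which vanishes at $0$ together with every $\varphi^{n}$, pins the Bloch-type seminorm of all iterates, namely $\sup_{z\in D}(1-|z|^{2})^{\alpha}|(\varphi^{n})'(z)|=1$ for all $n$.

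Next I bring in the identity $B(\alpha)=Lip_{1-\alpha}$: for $0<\alpha<1$ there is a constant $M=M(\alpha)\ge 1$ with $\tfrac1M L(g)\le\|g\|_{B(\alpha)}\le M\,L(g)$, where $L(g)=\sup_{z\ne w}|g(z)-g(w)|/|z-w|^{1-\alpha}$. Combined with the previous step this yields the uniform two-sided bound $\tfrac1M\le L(\varphi^{n})\le M$ for every $n$; in particular each $\varphi^{n}$ extends Hölder-continuously to $\overline{D}$. I then suppose, for contradiction, that $\varphi$ is not a rotation. A rotation is the only disk automorphism fixing $0$, so $\varphi$ is not an automorphism; by the Schwarz lemma $|\varphi'(0)|<1$, and by the Denjoy--Wolff theorem $\varphi^{n}\to 0$ locally uniformly on $D$. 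Writing $(\varphi^{n})'=\prod_{j=0}^{n-1}\varphi'(\varphi^{j})$ and using $\varphi^{j}\to 0$, the factors tend to $\varphi'(0)$, so $|(\varphi^{n})'|\to 0$ geometrically on each compact subset of $D$. This kills the interior contribution to the Hölder quotient, so any near-extremal pair for $L(\varphi^{n})$ must escape to $\partial D$, and the whole argument then rests on the boundary behaviour of the iterates.

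The hard part will be exactly this boundary analysis, and it is where $0<\alpha<1$ is essential through the continuous Hölder extension to $\overline{D}$. I expect a dichotomy according to whether $\varphi$ takes values in a compact subdisk or touches $\partial D$. If $\sup_{\overline{D}}|\varphi|=\rho<1$, then $\varphi^{n}(\overline{D})\subset\overline{D(0,\rho^{n})}$ and the derivative estimate upgrades to $\sup_{\overline{D}}|(\varphi^{n})'|\lesssim\mu^{n}$ for some $\mu<1$; interpolating $|\varphi^{n}(z)-\varphi^{n}(w)|\le\min(2\rho^{n},\ \sup_{\overline{D}}|(\varphi^{n})'|\,|z-w|)$ then forces $L(\varphi^{n})\to 0$, contradicting $L(\varphi^{n})\ge 1/M$. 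If instead $\varphi$ has boundary contact (e.g.\ a Blaschke-type factor), the growing oscillation of $\varphi^{n}$ on $\partial D$ drives $L(\varphi^{n})\to\infty$, contradicting $L(\varphi^{n})\le M$. In either regime the uniform pinning of the Hölder seminorms of the iterates is impossible unless $\varphi$ is a rotation, which completes the proof.
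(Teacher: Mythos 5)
Your proposal follows the same strategy the paper announces for this theorem (reduce to the analytic space, use $B(\alpha)=Lip_{1-\alpha}$ with equivalent norms, and study the iterates $\varphi^{n}$), and the parts you carry out are sound: sufficiency, the restriction argument (for analytic $f$ one has $f_{\bar z}=0$, so an isometry of $HB(\alpha)$ restricts to an isometry of $B(\alpha)$), $\varphi(0)=0$ from the Lemma, pinning $\sup_{z\in D}(1-|z|^{2})^{\alpha}|(\varphi^{n})'(z)|=1$ by testing $C_{\varphi}^{n}=C_{\varphi^{n}}$ on $f(z)=z$, and hence $\frac{1}{M}\le L(\varphi^{n})\le M$. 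Be aware, though, that the paper does not actually prove necessity either: after the Lemma it defers to Zorboska's Theorem 2.2 (the reference \cite{nz}), so the step you call ``the hard part'' is precisely the content the paper outsources --- and it is exactly where your attempt stops being a proof. Your second branch, ``boundary contact implies the growing oscillation of $\varphi^{n}$ drives $L(\varphi^{n})\to\infty$,'' is an unsupported assertion, and as phrased it is logically off: under the isometry hypothesis $L(\varphi^{n})$ \emph{is} bounded, so the blow-up must be \emph{derived} from the other constraints to yield a contradiction, not merely expected. A concrete way to close it: the sets $E_{n}=\{\zeta\in\partial D:|\varphi^{n}(\zeta)|=1\}$ are compact and nested (maximum modulus), so either some $E_{n}=\emptyset$, in which case $\sup_{\overline{D}}|\varphi^{n}|<1$ and your first regime applies to $\psi=\varphi^{n}$ (note $C_{\psi}$ is again an isometry), or there is $\zeta_{0}$ with $|\varphi^{n}(\zeta_{0})|=1$ for all $n$, and then for each fixed $r<1$, $L(\varphi^{n})\ge (1-|\varphi^{n}(r\zeta_{0})|)(1-r)^{-(1-\alpha)}\to (1-r)^{-(1-\alpha)}$ by Denjoy--Wolff, forcing $L(\varphi^{n})\to\infty$.

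There is also a cleaner route that avoids your dichotomy altogether and is closer to the cited argument: the uniform bound $L(\varphi^{n})\le M$ gives equicontinuity of $\{\varphi^{n}\}$ on $\overline{D}$, so by Arzel\`a--Ascoli together with $\varphi^{n}\to 0$ locally uniformly in $D$, in fact $\varphi^{n}\to 0$ uniformly on $\overline{D}$; then the chain rule yields $1=\sup_{z}(1-|z|^{2})^{\alpha}|\varphi'(\varphi^{n}(z))||(\varphi^{n})'(z)|\le\big(\sup_{|w|\le\|\varphi^{n}\|_{\infty}}|\varphi'(w)|\big)\cdot 1\to|\varphi'(0)|<1$, a contradiction with Schwarz's lemma --- here $0<\alpha<1$ enters exactly through the H\"older extension to $\overline{D}$. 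Finally, one intermediate claim in your first regime is false as stated: $\sup_{\overline{D}}|(\varphi^{n})'|\lesssim\mu^{n}$ cannot hold in general because $\varphi'$ need not be bounded on $D$ even when $\sup_{D}|\varphi|=\rho<1$ (Cauchy estimates only give $|\varphi'(z)|\le\rho/(1-|z|)$). This is repairable: $|\varphi^{n-1}|\le\rho^{n-1}$ on $D$ gives $|(\varphi^{n-1})'(w)|\le\rho^{n-1}/(1-\rho)$ for $|w|\le\rho$, hence $|\varphi^{n}(z)-\varphi^{n}(w)|\le\frac{\rho^{n-1}}{1-\rho}\,L(\varphi)\,|z-w|^{1-\alpha}$, so $L(\varphi^{n})\to 0$, which is the contradiction you wanted in that regime.
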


\begin{proof}
Let $\varphi(z)=\lambda z,$for any $z\in D,$ with $|\lambda|=1,$ i.e.,let $\varphi$ be a rotation. Then $C_{\varphi}$ is an isometry on all harmonic Bloch-type spaces $HB(\alpha)$, since for every $\alpha>0$
\begin{align*}
\|C_{\varphi}f\|_{HB(\alpha)}&=\|fo\varphi\|_{HB(\alpha)}\\
&=|f(0)|+\sup_{z\in D}(1-|z|^2)^\alpha|\lambda|[|f_{z}(\lambda z)|+|f_{\bar{z}}(\lambda z)|]\\
&=\|f\|_{HB(\alpha)}.
\end{align*}
Now we only have to prove the other implication.
Let $C_{\varphi}$ is an isometry on $HB(\alpha)$. By before Lemma, we have that $\varphi(0)=0$ and, as noted in the proof Lemma, $\|\varphi\|_{B(\alpha)}=1.$ Since, the proof for this Theorem, is the same as the proof of Teorem $2.2$ in, so we omit them.
\end{proof}

CASE $\alpha>1$. The proof of the characterization of isometric composition operator on spaces $HB(\alpha)$ with $\alpha>1$ relies on the Schwartz-Pick Lemma, which states that if $\varphi$ is a self-map of $D$, then

$$\sup_{z\in D}\frac{(1-|z|^2)|\varphi^{'}(z)|}{1-|\varphi(z)|^{2}}\leq1,$$

and there is equality at one point if and only if there is equality at every point in $D$ if and only if $\varphi$ is a disk automorphism.

\begin{thm}\label{t15}
Let $\alpha>1$ and $\varphi$ be an analytic self-map of $D$. Then the composition operator $C_{\varphi}$ is an isometry on $HB(\alpha)$ if and only if $\varphi$ is a rotation.
\end{thm}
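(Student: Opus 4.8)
The plan is to settle the trivial implication quickly and then devote the real effort to the converse, where the assumption $\alpha>1$ is exactly what forces rigidity. For the sufficiency, if $\varphi(z)=\lambda z$ with $|\lambda|=1$, the chain rule for the canonical decomposition gives $|(f\circ\varphi)_z|+|(f\circ\varphi)_{\bar z}|=|\varphi'(z)|\bigl(|f_z(\varphi(z))|+|f_{\bar z}(\varphi(z))|\bigr)=|f_z(\lambda z)|+|f_{\bar z}(\lambda z)|$, and since $|\lambda z|=|z|$ the substitution $w=\lambda z$ shows at once that $\|f\circ\varphi\|_{HB(\alpha)}=\|f\|_{HB(\alpha)}$ while $|f(\varphi(0))|=|f(0)|$; this is the computation already recorded for rotations above.

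For the necessity I would start from Lemma \ref{lem11}, which already supplies the two facts I need: $\varphi(0)=0$ and the normalization $\sup_{z\in D}(1-|z|^2)^\alpha|\varphi'(z)|=1$. Everything then reduces to showing that, for $\alpha>1$, this normalization forces $\varphi$ to be a rotation. Since $\varphi(0)=0$, the Schwarz lemma gives $|\varphi(z)|\le|z|$ and the Schwarz--Pick lemma gives $(1-|z|^2)|\varphi'(z)|\le 1-|\varphi(z)|^2$; multiplying the latter by $(1-|z|^2)^{\alpha-1}$ yields
$$(1-|z|^2)^\alpha|\varphi'(z)|=(1-|z|^2)^{\alpha-1}\bigl[(1-|z|^2)|\varphi'(z)|\bigr]\le (1-|z|^2)^{\alpha-1}\bigl(1-|\varphi(z)|^2\bigr)\le (1-|z|^2)^{\alpha-1}.$$

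The crucial observation, and the one place where $\alpha>1$ is used, is that the majorant $(1-|z|^2)^{\alpha-1}$ tends to $0$ as $|z|\to1$. Consequently the continuous function $g(z):=(1-|z|^2)^\alpha|\varphi'(z)|$, whose supremum is $1$, cannot come close to that value near the boundary and must attain it at an interior point $z_0$; concretely, fixing $r<1$ with $(1-r^2)^{\alpha-1}<\tfrac12$ traps every near-maximizer in the compact disk $\{|z|\le r\}$, where continuity delivers the maximum. Feeding $z_0$ into the displayed chain gives $1=g(z_0)\le(1-|z_0|^2)^{\alpha-1}\le1$, which forces $(1-|z_0|^2)^{\alpha-1}=1$, hence $z_0=0$, and then $g(0)=|\varphi'(0)|=1$. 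The equality statement in the Schwarz lemma (with $\varphi(0)=0$) now gives $\varphi(z)=\lambda z$ for some $|\lambda|=1$, which finishes the argument.

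The step I expect to require the most care is the attainment claim: one must argue that $\sup g=1$ is genuinely realized inside $D$ rather than merely approached, and it is exactly the strict positivity of $\alpha-1$ that guarantees this by pushing the majorant to zero at the boundary. Everything else is a routine double use of the Schwarz lemma. It is worth noting that this mechanism collapses when $\alpha=1$ (the majorant is then constantly $1$) and when $0<\alpha<1$ (where it blows up), which is consistent with those regimes being handled by entirely different arguments elsewhere in the paper.
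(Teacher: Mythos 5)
Your proof is correct and follows essentially the same route as the paper: both use Lemma \ref{lem11} to get $\varphi(0)=0$ and $\sup_{z\in D}(1-|z|^2)^\alpha|\varphi'(z)|=1$, factor in the Schwarz--Pick quotient, and exploit that $(1-|z|^2)^{\alpha-1}\to 0$ as $|z|\to 1$ forces the supremum to be attained inside $D$, where the equality case of Schwarz(--Pick) yields a rotation. Your write-up is in fact slightly more careful than the paper's at the attainment step (trapping near-maximizers in a compact disk and pinning the maximizer at $z_0=0$), but this is a refinement, not a different argument.
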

\begin{proof}
As mentioned before, we only have to prove that if $C_{\varphi}$ is an isometry, then $C_{\varphi}$ must be a rotation. By before Lemma, we have that $\varphi(0)=0$ and $\|\varphi\|_{B(\alpha)}=1.$ Thus

$$1=\sup_{z\in D}(1-|z|^2)^\alpha|\varphi^{'}(z)|=\sup_{z\in D}(1-|z|^2)^{\alpha-1}\frac{(1-|z|^2)|\varphi^{'}(z)|}{1-|\varphi(z)|^{2}}(1-|\varphi(z)|^{2}).$$

Since $\alpha-1>0$, and by the Schwartz-Pick Lemma,we have that all three factors in the last product are smaller or equal to $1$. They are also all continuous function on $D$, with $(1-|z|^2)^{\alpha-1}$ converging to $0$, as $|z|\rightarrow1.$ Hence the supremum must be attained at some point in $D$. However, again by the Schwartz-Pick Lemma, $\varphi$ must be a disk automorphism with $\varphi(0)=0,$ and so $\varphi$ must be a rotation.
\end{proof}

\end{document}